\newcommand{\vect}[1]{\boldsymbol{#1}}
\renewcommand{\d}{\ \mathrm{d}}
\newcommand{\z}{\phantom{0}}
\renewcommand{\d}{\mathrm{d}}
\newtheorem{ass}{Assumption}
\newtheorem{thm}{Theorem} 
\newtheorem{rmk}{Remark}
\newtheorem{lem}{Lemma}
\numberwithin{equation}{section}
\renewcommand{\d}{\mathrm{d}}
\newcommand{\dt}{\mathrm{dt}}
\newcommand{\kron}{\otimes}
\def\R1{\color{black}} 
\def\B{\color{black}}
\definecolor{ao(english)}{rgb}{0.1, 0.5, 0.1}
\definecolor{cobalt}{rgb}{0.0, 0.28, 0.67}
\renewcommand{\S}{\mathcal{S}}
\newcommand{\vecx}{\boldsymbol{x}}
\newcommand{\vertiii}[1]{{\left\vert\kern-0.25ex\left\vert\kern-0.25ex\left\vert #1 
    \right\vert\kern-0.25ex\right\vert\kern-0.25ex\right\vert}}
\DeclareMathOperator*{\argmin}{arg\,min}
\newcommand*\mathinhead[2]{\texorpdfstring{${#1}$}{#2}}
\newcommand{\strain}[1]{\adjustbox{max width=\textwidth}{$\displaystyle#1$}}
\begin{document}

\begin{frontmatter}

\title{\R1 Space-time least squares approximation for Schr{\"o}dinger equation and efficient solver \B}
\author[imati]{Andrea Bressan}\ead{andrea.bressan@imati.cnr.it}
\author[mat]{Alen Kushova\corref{cor1}}\ead{alen.kushova01@universitadipavia.it}
\author[mat,imati]{Giancarlo Sangalli} \ead{giancarlo.sangalli@unipv.it}
\author[mat]{Mattia Tani} \ead{mattia.tani@unipv.it}

\address[mat]{Dipartimento di Matematica ``F. Casorati", Universit\`{a} di Pavia, Via A. Ferrata, 5, 27100 Pavia, Italy.}

\address[imati]{Istituto di Matematica Applicata e Tecnologie Informatiche, ``E. Magenes" del CNR, Via A. Ferrata, 1, 27100 Pavia, Italy.}
 \cortext[cor1]{Corresponding author}
\begin{abstract}
In this work we present a space-time least squares isogeometric discretization 
of the Schr{\"o}dinger equation and propose a preconditioner for the arising 
linear system in the parametric domain. Exploiting the tensor product structure of the 
basis functions, the preconditioner is written as the sum of Kronecker products of 
matrices. Thanks to an extension to the classical Fast Diagonalization method, the application 
of the preconditioner is efficient and robust w.r.t. the polynomial degree of the spline space. The time
required for the application is almost proportional to the number of degrees-of-freedom, for a serial execution.
\end{abstract}
\begin{keyword}
  Isogeometric Analysis \sep splines \sep Schr{\"o}dinger equation \sep space-time
 Least squares formulation \sep Fast Diagonalization.
\end{keyword}
\end{frontmatter}



\section{Introduction}
Space-time finite element methods originated in the papers
\cite{fried1969finite, bruch1974transient, oden1969general, oden1969general2}, 
where standard finite elements are ascribed an extra dimension 
for the time  and, typically,  adopt a  discontinuous approximation 
in time, since this produces a time marching algorithm with a traditional 
step-by-step format (see e.g. \cite{shakib1991new}).  
Over the years, the theory of space-time methods has been developed mainly for evolutionary equations of the parabolic type
and hyperbolic type, whereas, for quantum mechanics, and more precisely for Schr{\"o}dinger's equation, there are few contributions 
and the methods are still in development.

To our knowledge, one of the first works concerning space-time variational 
formulations for the (nonlinear) Schr{\"o}dinger equation, is \cite{karakashian1998space}, 
in which Karakashian and Makridakis proposed a space–time method combining a conforming 
Galerkin discretization in space and an upwind DG time-stepping. This method reduces 
to a Radau IIA Runge-Kutta time discretization in the case of constant potentials.
In \cite{DEMKOWICZ}, for the linear Schr{\"o}dinger equation the authors propose 
two variational formulations that are proved to be well posed: a strong formulation, 
with no relaxation of the original equation, and an ultraweak formulation, that transfers 
all derivatives onto test functions. The proposed discretization for the ultraweak form 
is based on a discontinuous Petrov-Galerkin (DPG) method, that addresses optimal stability, 
and quasi-optimal error rates in $L^2$-norm. 
In \cite{gomez2022space} a space–time ultraweak Trefftz discontinuous Galerkin (DG) method 
for the Schr{\"o}dinger equation with piecewise-constant potential is proposed and analyzed, 
proving well-posedness and stability of the method, and optimal high-order $h$-convergence 
error estimates in a skeleton norm, for the one and two dimensional cases. 
Recently, in \cite{hain2022ultra}, Hain and Urban proposed a space–time ultraweak variational formulation 
with optimal inf-sup constant. The formulation in \cite{hain2022ultra} is
related to the ultraweak DPG method in \cite{DEMKOWICZ}, but differs in the choice 
of the test and trial spaces. Hain and Urban first fix a conforming test space, and then 
construct an optimal trial space, while Demkowicz et al. first constructs a trial space
and then a suitable test space. The discretization proposed in \cite{hain2022ultra} uses high order B-splines 
with maximum regularity and can be extended to the Isogeometric Analysis (IgA) framework.

Introduced in \cite{Hughes2005}, see also the book \cite{Cottrell2009}, 
IgA, is an evolution of the classical finite element methods. In IgA, both the approximation of the solution of the 
partial differential equation that models the problem, and the representation of the computational domain, 
are accomplished using B-spline functions, or their generalizations (NURBS). 
This is meant to simplify the interoperability between computer aided design and numerical simulations.
IgA also benefits  from the approximation properties of splines,  whose high-continuity   
yields  higher accuracy when compared to $C^0$ piecewise polynomials, see e.g.,
\cite{Evans_Bazilevs_Babuska_Hughes,bressan2018approximation,Sangalli2018}.

In this paper we focus on the linear time dependent Schr{\"o}dinger equation without potential. 
Starting from the well posed space-time strong formulations in \cite{DEMKOWICZ}, we derive a well posed 
space-time isogeometric Petrov-Galerkin discretization, that is essentially a Galerkin approximation of the space-time
least squares variational formulation of the model problem. The matrix associated to the discrete linear system can 
be written as sum of Kronecker products, and has the same structure of the one arising from \cite{hain2022ultra}.
The main contribution of this paper, is the development of a stable preconditioner that leads to 
a fast solver for the problem modeled in the parametric domain. As it was done in \cite{Montardini2018space,loli2020efficient} for parabolic problems, 
our preconditioner exploits the Kronecker structure of the linear system, and makes use of Fast Diagonalization method (FD) \cite{Lynch1964}.
In this work, FD is applied among the space direction only. Although, the computational cost of the setup
of the resulting preconditioner is $O(N_{dof})$ FLOating-Point operations (FLOPs), 
while its  application  is  $O(N_{dof}^{1 + 1/d})$ FLOPs, where $d$ is the number of spatial 
dimensions and  $N_{dof}$ denotes the total number of degrees-of-freedom (assuming, for simplicity,  
to have  the same number of  degrees-of-freedom in time and in each spatial direction).
We remark that global space-time methods, in principle,  facilitate the full 
parallelization of the solver, see \cite{dorao2007parallel,Gander2015,kvarving2011fast}. 

The outline of the paper is as follows. The model problem is introduced in Section 2. 
In Section 3 we present the basics of B-splines based IgA and the main properties of the Kronecker product operation. 
The isogeometric least squares discretization is introduced in Section 4, while in Section 5 we define the preconditioner for the parametric domain
and we discuss its application. We present the numerical results assessing the performance of the proposed 
preconditioner in Section 6. Finally, in the last section we draw some conclusions and we highlight 
some future research directions.

\section{Model problem}
We consider a bounded domain ${\Omega} \subset \mathbb{R}^d$, usually $d=1,2,3$, with Lipschitz boundary, and a time 
interval $(0,T)$, where $T > 0 $ is the final time. The space-time domain is denoted by $\mathcal{Q} := (0,T) \times {\Omega} $. 
Assuming Dirichlet boundary conditions, denote by $\Gamma_D := (0,T) \times \partial{\Omega} $
the Dirichlet boundary of the space-time cylinder $\mathcal{Q}$, while $\mathcal{Q}_0 = \{0\}\times {\Omega} $ is the initial side.
Our model problem is the Schr{\"o}dinger equation with homogeneous boundary and initial conditions:   we look for a solution $u$ such that 
\begin{equation}
\label{eq:problem}
	\left\{
	\begin{array}{rcllc}
		 \mathrm{i}\partial_t u -  \nu \Delta u  & = & f & \mbox{in } & \mathcal{Q},\\[1pt]
		 u  & = & 0 & \mbox{on } &\Gamma_D,\\[1pt]
		 u & = & 0 & \mbox{in } &\mathcal{Q}_0,
	\end{array}
	\right.
\end{equation}
where $\mathrm{i}$ is the imaginary unit and $\nu > 0$ is a constant coefficient 
usually depending on Planck's constant $\mathfrak{\hbar}$ and the mass of the modeled physical particle.   
We assume that  $f\in L^2(\mathcal{Q})$ and denote by $\mathbb{S} := \mathrm{i}\partial_t -\nu \Delta $ 
the Schr{\"o}dinger operator, $\mathbb{S}^*$ its formal adjoint, and $(\cdot,\cdot)$ the complex scalar 
product in $L^2(\mathcal{Q})$.

\subsection{Space-time variational formulation} 
\label{sec:problem}
Let us introduce the Hilbert spaces
\[\mathcal{V} \strain{
  :=\left\{ v \in L^2(\mathcal{Q}) : \mathbb{S}v \in L^2(\mathcal{Q}) \text{ and } (\mathbb{S}^*w,v) - (w,\mathbb{S}v) = 0 
                      \,\,\forall w \in \mathcal{C}^{\infty}_{0}(\mathbb{R}^{d+1}) : w |_{\Gamma_D \cup (\{T\}\times \Omega) } = 0 \right\},
  }
\]
\[
    \mathcal{W}:=  L^2(\mathcal{Q}),
  \]
endowed with the following norms  
\begin{equation*}
  \|v\|_{ \mathcal{V}}^2:=   
  \|v\|^2_{ L^2(\mathcal{Q})}+ \|\mathbb{S}v\|^2_{ L^2(\mathcal{Q})}  
  \ \text{ and }\ 
  \|w\|_{ \mathcal{W}}:=   \|w\|_{ L^2(\mathcal{Q})},
\end{equation*}
respectively.   Then, the space-time variational formulation of \eqref{eq:problem} reads:  
\begin{equation}
\label{eq:var_for}
\text{Find } u \in \mathcal{V} \text{ such that }  \mathcal{A}( {u},v) =
\mathcal{F} (v)  \quad \, \forall v \in  \mathcal{W},
\end{equation}  
where the sesquilinear form $\mathcal{A}(\cdot,\cdot)$ and the linear form $\mathcal{F}(\cdot)$ are defined $ \forall v\in\mathcal{V} \text{ and } \forall w\in \mathcal{W}$ as
\[
 \mathcal{A}(v,w) := \int_{\Omega}\int_{0}^T  \left(\mathbb{S} v \right) \overline{w} \,\dt \, \d\Omega 
\quad \text{and} \quad
\mathcal{F}(w)  := \int_{\Omega}\int_{0}^T f\,   \overline{w} \,\dt \, \d\Omega.
\] 
The well-posedness  of the variational formulation above, for $d=1$, is in \cite{DEMKOWICZ}, but the generalization to $d>1$ is straightforward{, see the Appendix of this paper}.    

\subsection{Extensions}
The previous setting can be generalized  to    non-homogeneous initial and boundary conditions.
For example, suppose that in \eqref{eq:problem} we have the initial condition $u=u_0$ in $\mathcal{Q}_0$ with $u_0\in L^2(\Omega).$  Then, we consider a lifting  $\underline{u}_0$ of $u_0$ such that $\underline{u}_0\in L^2(\mathcal{Q})$, see e.g. \cite{Evans2010book}. 
Finally, we split the solution $u$ as $u=\underline{u}+\underline{u}_0$, where $\underline{u}\in\mathcal{X}$ is the solution of the following Schr{\"o}dinger equation with homogeneous initial and boundary conditions:
 \begin{equation*} 
	\left\{
	\begin{array}{rcllc}
    \mathrm{i}\partial_t \underline{u} -  \nu \Delta \underline{u}  & = & \underline{f} & \mbox{in } & \mathcal{Q},\\[1pt]
    \underline{u}  & = & 0 & \mbox{on } &\Gamma_D,\\[1pt]
    \underline{u} & = & 0 & \mbox{in } &\mathcal{Q}_0,
 \end{array}
	\right.
	 \end{equation*}
	 where $\underline{f}:=f-\mathbb{S}\underline{u}_0$.

\section{Isogeometric framework and preliminaries}
\subsection{B-Splines}

Given $m$ and $p$ two positive integers, a 
 knot vector in $[0,1]$ is   a sequence of non-decreasing points
 $\Xi:=\left\{ 0=\xi_1 \leq \dots \leq \xi_{m+p+1}=1\right\}$. 
 We consider open knot vectors, i.e.  we set $\xi_1=\dots=\xi_{p+1}=0$
 and $\xi_{m}=\dots=\xi_{m+p+1}=1$, and denote by $Z= \{\zeta_1,\dots,\zeta_{r}\}$ the vector of breakpoints, that is the vector of knots without repetition.

Univariate B-splines $\widehat{b}_{i,p}:(0,1)\rightarrow \mathbb{R}$ are  piecewise polynomials defined according to   Cox-De Boor recursion formulas (see  \cite{DeBoor2001}).
The univariate spline space is defined as
\begin{equation*}
\widehat{\S}_h^p : = \mathrm{span}\{\widehat{b}_{i,p}\}_{i = 1}^m,
\end{equation*}
where $h$ denotes the mesh-size, i.e. $ h:=\max\{ |\xi_{i+1}-\xi_i| \ | \ i=1,\dots,m+p \}$.
The interior knot multiplicity
  influences the smoothness of the B-splines at
the knots (see \cite{DeBoor2001}). For more details on B-splines properties  and their use in IgA we refer to  \cite{Cottrell2009}.

Multivariate B-splines are defined as tensor product of univariate
B-splines.  
We consider  functions that depend on $d$ spatial variables  and the time variable. Therefore, given positive integers $m_l, p_l$ for $l=1,\dots,d$ and $m_t,p_t$, we  introduce $d+1 $
univariate knot vectors $\Xi_l:=\left\{ \xi_{l,1} \leq \dots \leq
  \xi_{l,m_l+p_l+1}\right\}$ , with associated breakpoints $Z_l= \{\zeta_{l,1},\dots,\zeta_{l,r_l}\}$,  for $l=1,\ldots, d$ and   $\Xi_t:=\left\{
  \xi_{t,1} \leq \dots \leq \xi_{t,m_t+p_t+1}\right\}$ with  $Z_t= \{\zeta_{t,1},\dots,\zeta_{t,r_t}\}$.
  Let $h_l$ be the mesh-size associated to the knot vector $\Xi_l$ for $l=1,\dots,d$, let $h_s:=\max\{h_l\ | \ l=1,\dots, d\}$ be the maximal mesh-size in all spatial knot vectors and let $h_t$ be the mesh-size of the time knot vector. 
  Let also $\boldsymbol{p}$ be the vector that contains the degree indexes, i.e. $\boldsymbol{p} :=(p_t,\boldsymbol{p}_s)$, 
  where $\boldsymbol{p}_s:= (p_1,\dots,p_d )$. For simplicity, we assume to have the same polynomial degree in all spatial directions, i.e., with abuse of notations, we set  $p_1=\dots=p_d=:p_s$, but the general case is similar.
 
We assume that the following local quasi-uniformity of the knot vectors holds.

\begin{ass}
\label{ass:quasi-uniformity}
There exists  $\theta \geq 1 $, independent of $h_s$ and $h_t$, such that $ \theta^{-1} \leq \zeta_{l,i} / \zeta_{l,i+1} \leq
\theta$  for $i = 1,\dots,r_l$, $l =1,\dots,  d$   and  $ \theta^{-1} \leq \zeta_{t,i}/\zeta_{t,i+1} \leq \theta$ for $i = 1,\dots,r_t$.
\end{ass}

The multivariate B-splines are defined as
\begin{equation*} 
\widehat{B}_{\vect{i},\vect{p}}(\tau, \vect{\eta}) : =
\widehat{b}_{i_t,p_t}(\tau) \widehat{B}_{\vect{i_s}, \vect{p}_s}(\vect{\eta}),
\end{equation*}
 where 
 \begin{equation}
   \label{eq:tens-prod-space-parametric}
   \widehat{B}_{\vect{i_s},\vect{p}_s}(\vect{\eta}):=\widehat{b}_{i_1,p_s}(\eta_1) \ldots \widehat{b}_{i_d,p_s}(\eta_d),
 \end{equation}
  $\vect{i_s}:=(i_1,\dots,i_d)$, $\vect{i}:=(i_t,\vect{i_s})$  and  $\vect{\eta} = (\eta_1, \ldots, \eta_d)$.  
The  corresponding spline space  is defined as
\begin{equation*}
\widehat{\S} ^{\vect{p}}_{ {h}  }  := \mathrm{span}\left\{\widehat{B}_{\vect{i}, \vect{p}} \ \middle| \  i_t=1,\dots,m_t; \ i_l = 1,\dots, m_l \text{ for } l=1,\dots,d \right\},
\end{equation*} 
  where $h:=\max\{h_t, h_s\}$. 
We have that
$\widehat{\S} ^{\vect{p}}_{ {h}}  = \widehat{\S} ^{p_t}_{h_t} \otimes \widehat{\S} ^{ \vect{p}_s}_{ {h}_s}, $ 
where   
\[\widehat{\S} ^{\vect{p}_s}_{h_s} 
  := \mathrm{span}\left\{
  \widehat{B}_{\vect{i_s},\vect{p}_s} \ \middle|  \ i_l =
  1,\dots, m_l; l=1,\dots,d  \right\}
\] is the space of tensor-product splines on $\widehat{\Omega}:=(0,1)^d$.  Finally, we make the following regularity assumption.
\begin{ass} 
\label{ass:knot_mult}
We assume that $p_t\geq 1$, $p_s\geq 2$ and that
$\widehat{\S} ^{{p}_t}_{h_t} \subset C^0\left((0,1)\right)$  
and  
$\widehat{\S} ^{\vect{p}_s}_{h_s} \subset C^1(\widehat{\Omega}  )$. 
\end{ass}


\subsection{Isogeometric spaces}
\label{sec:iso_space}
The space-time computational domain that we consider is $(0,T) \times \Omega$, where $T>0$ is the final time and $\Omega\subset\mathbb{R}^d$ is the space domain. 
{ The choice of considering the time as first variable will be clarified in Section \ref{sec:application_of_prec}}.  
The following assumptions asserts the regularity of the parametrization. 
 
\begin{ass}
\label{ass: regular-single-patch-domain}
 We assume  that  $\Omega$  is parametrized by  $\vect{F}: \widehat{\Omega} \rightarrow {\Omega} $,
 with  $\vect{F}\in  {\left[\widehat{\mathcal{S}}^{\vect{p}_s}_{{h}_s}\right]^d} $ on the closure of $\widehat{\Omega}$. Moreover, we assume that 
 $\vect{F}^{-1}$ has piecewise bounded derivatives of any order.
\end{ass}

 Denote by  $\vecx=(x_1,\dots,x_d):=  \vect{F}(\vect{\eta})$ and
 $t:=T\tau$. Then the space-time domain  is given by  the parametrization
 $\vect{G}:(0,1) \times \widehat{\Omega}\rightarrow (0,T) \times \Omega$, 
 such that $ \vect{G}(\tau, \vect{\eta}):=(T\tau, \vect{F}(\vect{\eta}))=(t,\vecx).$
 
 We introduce the spline space with initial and boundary
 conditions, in parametric coordinates, as  
\begin{equation*}
\widehat{\mathcal{X}}_{h,0}:=\left\{ \widehat{v}_h\in \widehat{\mathcal{S}}^{\vect{p}}_h \ \middle| \ \widehat{v}_h = 0 \text{ on } \{0\} \times \widehat{\Omega} \text{ and } \widehat{v}_h = 0 \text{ on } (0,1) \times \partial\widehat{\Omega} \right\}.
\end{equation*}
 Notice that  
  $ \widehat{\mathcal{X}}_{h,0} =   \widehat{\mathcal{X}}_{t,h_t,0}   \otimes \widehat{\mathcal{X}}_{s,h_s}     $,  where 
 \begin{subequations}
 \begin{align*}
  \widehat{\mathcal{X}}_{t,h_t,0} & := \left\{ \widehat{w}_h\in \widehat{\mathcal{S}}^{ p_t}_{h_t} \ \middle|  \ \widehat{w}_h( 0)=0 \right\}  \   = \ \text{span}\left\{ \widehat{b}_{i_t,p_t} \ \middle| \ i_t = 2,\dots , m_t\ \right\},\\
  \widehat{\mathcal{X}}_{s,h_s}   & := \left\{ \widehat{w}_h\in \widehat{\mathcal{S}}^{\vect{p}_s}_{h_s}    \ \middle| \ \widehat{w}_h = 0 \text{ on } \partial\widehat{\Omega}  \right\}\  \\
  &  \ = \ \text{span}\left\{ \widehat{b}_{i_1,p_s}\dots\widehat{b}_{i_d,p_s} \ \middle| \ i_l = 2,\dots , m_l-1; \ l=1,\dots,d\ \right\}.
 \end{align*}
 \end{subequations}
Analogously, the spline space with final and boundary conditions, in parametric coordinates, is 
\begin{align*} 
  \widehat{\mathcal{X}}_{h,T} 
  &:= \left\{ \widehat{v}_h\in \widehat{\mathcal{S}}^{\vect{p}}_h \ \middle| \ \widehat{v}_h = 0 \text{ on } \{T\} \times \widehat{\Omega} \text{ and } \widehat{v}_h = 0 \text{ on } (0,1) \times \partial\widehat{\Omega} \right\} \\
  &=\widehat{\mathcal{X}}_{t,h_t,T}   \otimes \widehat{\mathcal{X}}_{s,h_s} 
\end{align*}
where 
$$
\widehat{\mathcal{X}}_{t,h_t,T}:= \left\{ \widehat{w}_h\in \widehat{\mathcal{S}}^{ p_t}_{h_t} \ \middle|  \ \widehat{w}_h( T)=0 \right\}  \   = \ \text{span}\left\{ \widehat{b}_{i_t,p_t} \ \middle| \ i_t = 1,\dots , m_t-1\ \right\}.
$$

More precisely, by reordering the basis functions, i.e. for the space with initial and boundary homogeneous conditions, we write  
\begin{align*}
  \  \widehat{\mathcal{X}}_{t,h_t,0}  &  = \ \text{span}\left\{ \widehat{b}_{i,p_t} \ \middle| \ i = 1,\dots , n_t\ \right\},\\
  \widehat{\mathcal{X}}_{s,h_s} &   = \ \text{span}\left\{ \widehat{b}_{i_1,p_s}\dots\widehat{b}_{i_d,p_s} \ \middle| \ i_l = 1,\dots , n_{s,l}; \ l=1,\dots,d\ \right\}\\
  & \ =\text{span}\left\{ \widehat{B}_{i, \vect{p}_s} \ \middle|\ i =1,\dots , N_s   \ \right\},
\end{align*}
  and then
\begin{equation}
\widehat{\mathcal{X}}_{h,0}=\text{span}
\left\{ \widehat{B}_{{i}, \vect{p}} \ \middle|\ i=1,\dots,N_{dof} \right\},
\label{eq:all_basis}
\end{equation}
where we defined $n_t:=m_t-1$, $n_{s,l}:= m_l-2 $ for $l=1,\dots,d$, $N_s:=\prod_{l=1}^dn_{s,l}$ and $ N_{dof}:=n_t N_s$.  
We can proceed analogously with the space with final conditions. 

Finally, the isogeometric space we consider is the isoparametric push-forward of \eqref{eq:all_basis} through the geometric map $\vect{G}$, i.e.
\begin{equation}
\mathcal{X}_{h,0} := \text{span}\left\{  B_{i, \vect{p}}:=\widehat{B}_{i, \vect{p}}\circ \vect{G}^{-1} \ \middle| \ i=1,\dots , N_{dof}   \right\}.
\label{eq:disc_space}
\end{equation}
Notice that   
 $\mathcal{X}_{h,0}= \mathcal{X}_{t,h_t,0} \otimes \mathcal{X}_{s,h_s} $,
   where 
\[ 
  \mathcal{X}_{t,h_t,0}   :=\text{span}\left\{  {b}_{i,p_t}:= \widehat{b}_{i,p_t}( \cdot /T) \ \middle| \ i=1,\dots,n_t \right\},
  \]
and
\[ 
  \mathcal{X}_{s,h_s}    :=\text{span}\left\{ {B}_{i, \vect{p}_s}:= \widehat{B}_{i, \vect{p}_s}\circ \vect{F}^{-1} \ \middle| \ i=1,\dots,N_s \right\}.
\]
Analogously, we define $\mathcal{X}_{h,T}$, the isogeometric space with homogeneous Dirichlet and final conditions.


\subsection{Kronecker product}
 
The Kronecker product of two matrices $\mathbf{C}\in\mathbb{C}^{n_1\times n_2}$ and $\mathbf{D}\in\mathbb{C}^{n_3\times n_4}$ is defined as
\[
\mathbf{C}\otimes \mathbf{D}:=\begin{bmatrix}
[\mathbf{C}]_{1,1}\mathbf{D}  & \dots& [\mathbf{C}]_{1,n_2}\mathbf{D}\\
\vdots& \ddots &\vdots\\
[\mathbf{C}]_{n_1, 1}\mathbf{D}& \dots & [\mathbf{C}]_{n_1, n_2}\mathbf{D}
\end{bmatrix}\in \mathbb{C}^{n_1n_3\times n_2 n_4},
\]
where   $[\mathbf{C}]_{i,j}$ denotes the $ij$-th entry of the matrix $\mathbf{C}$. 
For extensions and   properties of the Kronecker product   we refer to \cite{Kolda2009}.
In particular, when   a  matrix   has a Kronecker product structure, the matrix-vector product   can be efficiently computed. For this
purpose, for $m=1,\dots,d+1$  we  introduce the $m$-mode product   of a tensor $\mathfrak{X}\in\mathbb{C}^{n_1\times\dots\times n_{d+1}}$ with a matrix $\mathbf{J}\in\mathbb{C}^{\ell\times n_m}$,   that we denote by  $\mathfrak{X}\times_m \mathbf{J}$.  This is 
  a tensor of size $n_1\times\dots\times n_{m-1}\times \ell \times n_{m+1}\times \dots n_{d+1}$,   whose elements are  defined as
\[\left[ \mathfrak{X}\times_m \mathbf{J} \right]_{i_1, \dots, i_{d+1}} := \sum_{j=1}^{n_m} [\mathfrak{X}]_{i_1,\dots, i_{m-1},j,i_{m+1},\dots,i_{d+1}}[\mathbf{J}]_{i_m,j }.\]
Then, given $\mathbf{J}_i\in\mathbb{C}^{\ell_i\times n_i}$ for $i=1,\dots, d+1$, it holds
\begin{equation}
\label{eq:kron_vec_multi}
\left(\mathbf{J}_{d+1}\otimes\dots\otimes \mathbf{J}_1\right)\text{vec}\left(\mathfrak{X}\right)=\text{vec}\left(\mathfrak{X}\times_1 \mathbf{J}_1\times_2 \dots \times_{d+1}\mathbf{J}_{d+1} \right),
\end{equation} 
 where the vectorization operator ``vec" applied to a tensor stacks its entries  into a column vector as
 \[ [\text{vec}(\mathfrak{X})]_{j}=[\mathfrak{X}]_{i_1,\dots,i_{d+1}} \text{ for }   i_l=1,\dots,n_{l} \text{ and for } l=1,\dots,d+1,   \] 
where  $j:=i_1+\sum_{k=2}^{d+1}\left[(i_k-1)\Pi_{l=1}^{k-1}n_l\right]$.

\section{Space-time discretization of the Schr{\"o}dinger equation}
 
\subsection{Instability of the space-time Galerkin method}
Let $\mathcal{V}_h := \mathcal{X}_{h,0}$ be the isogeometric space
defined in  \eqref{eq:disc_space} endowed with the $\|\cdot\|_{\mathcal{V}}$-norm, and choose
$\mathcal{W}_h := \mathcal{X}_{h,0}$ endowed with the $\|\cdot\|_{\mathcal{W}}$-norm.
Consider the following Galerkin method for \eqref{eq:var_for}:
\begin{equation}
\label{eq:discrete-system}
 \text{Find }   u_h\in \mathcal{V}_h \text{ such that } \mathcal{A}(u_h, w_h) = \mathcal{F}(w_h) \quad \, \forall w_h\in  \mathcal{W}_h.
\end{equation} 
The stability and the  well-posedness of formulation \eqref{eq:discrete-system} are not guaranteed,
indeed the inf-sup constant
$$
{\alpha}_h := \inf_{v_h \in \mathcal{V}_h} \sup_{w_h \in \mathcal{W}_h} \dfrac{|\mathcal{A}(v_h,w_h)|}{\|v_h\|_{\mathcal{V}}\|w_h\|_{\mathcal{W}}}>0,
$$
depends on the mesh size $h$ and degenerates under mesh refinement, as shown in Figure \ref{fig:galerkin_infsup_test}. 
\begin{figure}
  \centering
  \subfloat[]
  [Instability of space-time Galerkin method. \label{fig:galerkin_infsup_test}]
  {\includegraphics[width = 0.5\textwidth]{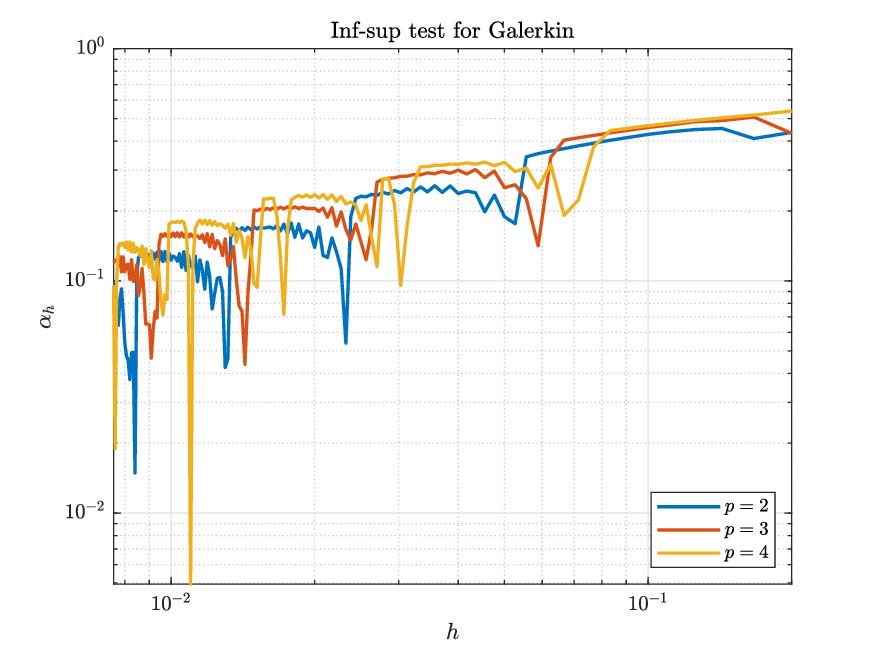}}
  \hfill
  \subfloat[]
  [Stability of space-time least squares method. \label{fig:least_squares_infsup_test}]
  {\includegraphics[width = 0.5\textwidth]{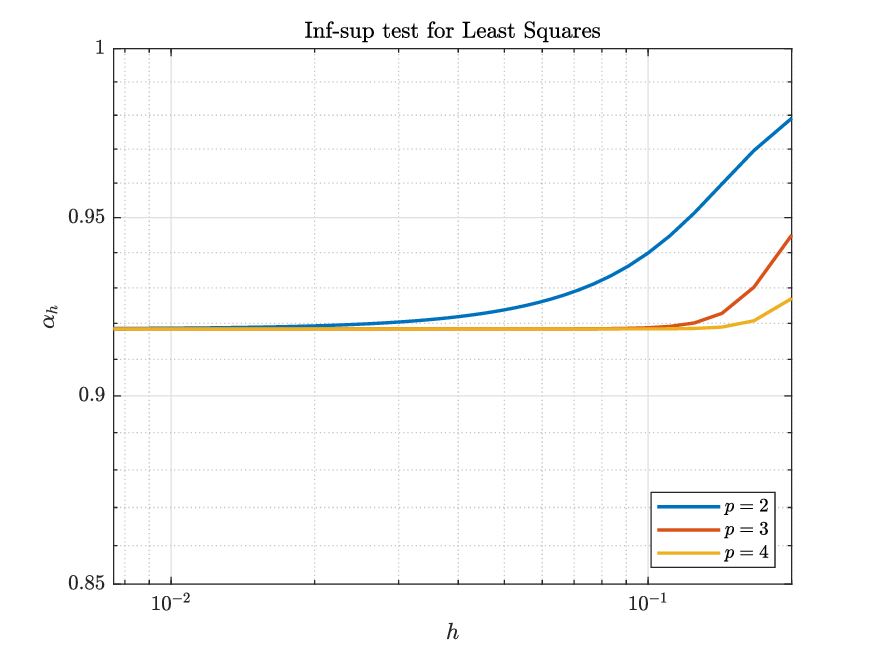}}
  \caption{Inf-sup test for the space-time discretizations.}
  \label{fig:infsup_test}
\end{figure}

\subsection{Least squares space-time method}
In order to retrieve a well posed space-time discretization to \eqref{eq:var_for},
notice that, given the quadratic functional $\mathcal{J}: \mathcal{V} \to \mathbb{R}$, defined as 
\begin{equation*}
    \mathcal{J}(u) := \dfrac{1}{2} \|\mathbb{S}u - f\|^2_{L^2(\mathcal{Q})},
\end{equation*}
we can write the least squares space-time formulation of \eqref{eq:problem}: find $u \in \mathcal{V}$ such that 
\begin{equation*}\label{eq:least_square_space_time_problem}
    u = \argmin_{v \in \mathcal{V}} \mathcal{J}(v),
\end{equation*}
and its Euler-Lagrange equations are 
\begin{equation*}
 \left( \mathbb{S}u, \mathbb{S}v \right) = \left( f, \mathbb{S}v \right),\quad \forall v \in \mathcal{V}.
\end{equation*}

This suggests to consider the following least-square discretization method for \eqref{eq:var_for}.
Let $\mathcal{V}_h := \mathcal{X}_{h,0}$ be the isogeometric space defined in  \eqref{eq:disc_space} 
endowed with the $\|\cdot\|_{\mathcal{V}}$-norm, and choose $\mathcal{W}_h := \mathbb{S}(\mathcal{V}_h)$ 
endowed with the $\|\cdot\|_{\mathcal{W}}$-norm. Consider the following Petrov-Galerkin approximation 
method for \ref{eq:var_for}:
\begin{equation}
 \label{eq:ls-discrete-system}
  \text{Find }   u_h\in \mathcal{V}_h \text{ such that } \mathcal{A}(u_h, w_h) = \mathcal{F}(w_h) \quad \, \forall w_h\in  \mathcal{W}_h.
\end{equation} 
Notice that, $\mathbb{S}$ is a bijection between the two discrete spaces, which means, for any $h>0$ it exists the inf-sup constant $\underline{\alpha}_h>0$. 
Moreover, for this discretization, the inf-sup $\alpha_h$ is uniformly bounded from below by a positive constant $\alpha>0$, as it is investigated numerically in Figure \ref{fig:least_squares_infsup_test}.
\begin{thm}\label{thm:quasi-optimality} 
  There exists a unique solution $u_h\in \mathcal{V}_h$ to the discrete
  problem \eqref{eq:ls-discrete-system}. Moreover, it holds 
  \begin{equation*} 
  \|u-u_h\|_{\mathcal{V}}\leq \dfrac{1}{\alpha} \inf_{v_h\in \mathcal{V}_h}\|u-v_h \|_{\mathcal{V}},
  \end{equation*}
  where $u\in\mathcal{V}$ is the solution of \eqref{eq:var_for}.
\end{thm}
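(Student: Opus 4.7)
The plan is to reduce the discrete theorem to the continuous inf-sup stability of \eqref{eq:var_for} (cited from \cite{DEMKOWICZ} and the appendix), exploiting the fact that the Petrov-Galerkin discretization \eqref{eq:ls-discrete-system} is just the normal equation of the discrete least-squares minimization over $\mathcal{V}_h \subset \mathcal{V}$.

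First I would record two structural observations. Since $\mathcal{W}_h = \mathbb{S}(\mathcal{V}_h)$, every test function has the form $w_h = \mathbb{S}z_h$ with $z_h \in \mathcal{V}_h$, and by definition of $\mathcal{A}$ and $\|\cdot\|_{\mathcal{W}}$
\[
  \sup_{w_h \in \mathcal{W}_h}\frac{|\mathcal{A}(v_h,w_h)|}{\|w_h\|_{\mathcal{W}}}
  = \sup_{z_h \in \mathcal{V}_h}\frac{|(\mathbb{S}v_h,\mathbb{S}z_h)|}{\|\mathbb{S}z_h\|_{L^2(\mathcal{Q})}}
  = \|\mathbb{S}v_h\|_{L^2(\mathcal{Q})},
\]
the last equality by choosing $z_h=v_h$ and Cauchy–Schwarz. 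Thus
\[
  \alpha_h \;=\; \inf_{v_h \in \mathcal{V}_h}\frac{\|\mathbb{S}v_h\|_{L^2(\mathcal{Q})}}{\|v_h\|_{\mathcal{V}}}
  \;\geq\; \inf_{v \in \mathcal{V}}\frac{\|\mathbb{S}v\|_{L^2(\mathcal{Q})}}{\|v\|_{\mathcal{V}}} \;=:\; \alpha > 0,
\]
because $\mathcal{V}_h \subset \mathcal{V}$ and the continuous inf-sup, in the same form with $\mathcal{W} = L^2(\mathcal{Q})$, gives a positive $\alpha$. In particular $\mathbb{S}$ is injective on $\mathcal{V}_h$, so $\dim\mathcal{W}_h = \dim\mathcal{V}_h$ and the square linear system \eqref{eq:ls-discrete-system} has a unique solution $u_h$.

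Next I would derive the best-approximation estimate in the $\|\mathbb{S}\cdot\|_{L^2}$ seminorm directly from the variational principle. Testing \eqref{eq:ls-discrete-system} with $w_h = \mathbb{S}(u_h - v_h)$ for arbitrary $v_h \in \mathcal{V}_h$, and using that $\mathbb{S}u = f$ in $L^2(\mathcal{Q})$, a standard Galerkin orthogonality gives $(\mathbb{S}(u-u_h),\mathbb{S}z_h) = 0$ for all $z_h\in\mathcal{V}_h$. This is exactly the orthogonality of $\mathbb{S}u_h$ onto $\mathbb{S}(\mathcal{V}_h)$ for the $L^2(\mathcal{Q})$-inner product, hence
\[
  \|\mathbb{S}(u-u_h)\|_{L^2(\mathcal{Q})}
  \;=\; \min_{v_h \in \mathcal{V}_h}\|\mathbb{S}(u-v_h)\|_{L^2(\mathcal{Q})}.
\]

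Finally I would close the argument by chaining these two facts with the continuous inf-sup: for every $v_h \in \mathcal{V}_h$,
\[
  \alpha\,\|u-u_h\|_{\mathcal{V}}
  \;\leq\; \|\mathbb{S}(u-u_h)\|_{L^2(\mathcal{Q})}
  \;\leq\; \|\mathbb{S}(u-v_h)\|_{L^2(\mathcal{Q})}
  \;\leq\; \|u-v_h\|_{\mathcal{V}},
\]
where the first inequality is the inf-sup applied to $u-u_h\in\mathcal{V}$, the second is the best-approximation property just shown, and the third is the definition of the graph norm. Dividing by $\alpha$ and taking the infimum over $v_h$ yields the claim with the stated constant $1/\alpha$. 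The only nontrivial ingredient is the continuous inf-sup bound $\alpha>0$, which is invoked from the well-posedness result referenced in Section \ref{sec:problem}; once that is in place, the argument is self-contained and does not require any further discrete inverse inequality or CFL-type condition.
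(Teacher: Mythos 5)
Your proof is correct, and it is worth noting that the paper itself gives no explicit proof of this theorem: it states the result immediately after asserting that the discrete inf-sup constants $\alpha_h$ are uniformly bounded below by some $\alpha>0$, a fact the paper supports only by the numerical experiment in Figure \ref{fig:least_squares_infsup_test}. Your argument goes further than the paper on precisely this point. By observing that the supremum over $\mathcal{W}_h=\mathbb{S}(\mathcal{V}_h)$ is attained at $w_h=\mathbb{S}v_h$, you identify $\alpha_h$ with $\inf_{v_h\in\mathcal{V}_h}\|\mathbb{S}v_h\|_{L^2(\mathcal{Q})}/\|v_h\|_{\mathcal{V}}$ and then bound it below by the corresponding infimum over all of $\mathcal{V}$, which is positive because $\mathbb{S}:\mathcal{V}\to L^2(\mathcal{Q})$ is a continuous bijection between Banach spaces (the appendix result plus the bounded inverse theorem, using that $\mathcal{V}$ is complete in the graph norm). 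This replaces the paper's numerical evidence with an analytic proof of the uniform discrete inf-sup bound, which is the standard and essentially automatic feature of least-squares discretizations that the paper does not exploit explicitly. The remaining steps --- Galerkin orthogonality $(\mathbb{S}(u-u_h),\mathbb{S}z_h)=0$, the resulting best-approximation property of $\mathbb{S}u_h$ in $L^2(\mathcal{Q})$, and the chain $\alpha\|u-u_h\|_{\mathcal{V}}\leq\|\mathbb{S}(u-u_h)\|_{L^2(\mathcal{Q})}\leq\|\mathbb{S}(u-v_h)\|_{L^2(\mathcal{Q})}\leq\|u-v_h\|_{\mathcal{V}}$ --- are exactly the classical least-squares quasi-optimality argument that the theorem's constant $1/\alpha$ presupposes, so your write-up is a faithful and complete reconstruction of what the paper leaves implicit, with the bonus that the constant $\alpha$ is now pinned down as the continuous stability constant rather than an empirically observed lower bound.
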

We have then the following a-priori estimate for  $h$-refinement. 
\begin{thm}
  Let $q_t$ and $q_s$ be two integers, such that $ q_t\geq1$ and $q_s \geq 2$. 
  If  $u \in \mathcal{V} \cap \left( H^{q_t} (0,T) \otimes H^{2}(\Omega) \right) \cap \left( H^{1} (0,T) \otimes H^{q_s}(\Omega) \right)$ 
  is the solution of \eqref{eq:var_for} and $u_h \in \mathcal{V}_h$ is the solution of \eqref{eq:ls-discrete-system},
  then it holds
  \begin{equation} 
    \label{eq:a-priori-error-bound}
    \|u-u_h\|_{\mathcal{V}}\leq C \left(h_t^{k_t-1}\| u \|_{ H^{k_t}  (0,T) \otimes H^{2}(\Omega ) } + h_s^{k_s-2}\| u \|_{ H^{1}  (0,T) \otimes H^{k_s}(\Omega ) }\right),  
  \end{equation}
  where $k_t := \min\{q_t,p_t+1\}$, $k_s := \min\{q_s,p_s+1\}$, and $C$ is a constant that depends only on $p_t, p_s , \theta$ and the parametrization $\vect{G}$.
\end{thm}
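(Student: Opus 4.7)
The plan is to invoke the quasi-optimality bound of Theorem~\ref{thm:quasi-optimality} in order to reduce the estimate to a best-approximation problem in the $\mathcal{V}$-norm:
\[
\|u - u_h\|_{\mathcal{V}} \le \alpha^{-1} \inf_{v_h \in \mathcal{V}_h} \|u - v_h\|_{\mathcal{V}}.
\]
I would then exhibit a suitable candidate $v_h = \Pi_h u$, where $\Pi_h = \Pi_t \otimes \Pi_s$ is a tensor-product quasi-interpolant built from a univariate time projector $\Pi_t$ onto $\mathcal{X}_{t,h_t,0}$ that preserves the initial condition at $t=0$, and a multivariate space projector $\Pi_s$ onto $\mathcal{X}_{s,h_s}$ that preserves homogeneous Dirichlet data on $\partial\Omega$. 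Such projectors are standard in the IgA literature, and, under Assumption~\ref{ass:quasi-uniformity}, satisfy the univariate anisotropic Sobolev estimate $\|(I-\Pi_t)w\|_{H^\ell(0,T)} \lesssim h_t^{k-\ell}\|w\|_{H^k(0,T)}$ for $0\le \ell \le k \le p_t+1$, and the analogous estimate for $\Pi_s$ up to $H^2$-stability in space.

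Next, since $\|\cdot\|_{\mathcal{V}}^2 = \|\cdot\|_{L^2(\mathcal{Q})}^2 + \|\mathbb{S}\cdot\|_{L^2(\mathcal{Q})}^2$ and $\mathbb{S} = \mathrm{i}\partial_t - \nu\Delta$, it suffices to control the three quantities $\|e\|_{L^2(\mathcal{Q})}$, $\|\partial_t e\|_{L^2(\mathcal{Q})}$, and $\|\Delta e\|_{L^2(\mathcal{Q})}$, where $e := u - \Pi_h u$. I would then apply the standard tensor splitting
\[
e = (I-\Pi_t)u \;+\; \bigl(\Pi_t\otimes (I-\Pi_s)\bigr)u
\]
together with the variable-wise univariate estimates. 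For $\|\partial_t e\|_{L^2}$, the factor $(I-\Pi_t)$ costs $h_t^{k_t-1}$ in time while only $L^2$-stability in space is needed, so this term is bounded by $h_t^{k_t-1}\|u\|_{H^{k_t}(0,T)\otimes L^2(\Omega)}$, which is dominated by $h_t^{k_t-1}\|u\|_{H^{k_t}(0,T)\otimes H^2(\Omega)}$. For $\|\Delta e\|_{L^2}$, the factor $(I-\Pi_s)$ costs $h_s^{k_s-2}$ in space, and I would use $H^1$-stability of $\Pi_t$ in time to control the $\Pi_t\otimes (I-\Pi_s)$ branch after applying $\partial_t$, yielding a bound by $h_s^{k_s-2}\|u\|_{H^1(0,T)\otimes H^{k_s}(\Omega)}$. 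The term $\|e\|_{L^2}$ is of higher order and is absorbed into the two dominant contributions.

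Combining the three bounds, multiplying by $\alpha^{-1}$ and by a constant depending on $\nu$, $p_t$, $p_s$, $\theta$ and the geometric parametrization $\vect{G}$, yields \eqref{eq:a-priori-error-bound}. The dependence on $\vect{G}$ enters through the usual chain-rule passage from estimates on the parametric cylinder $(0,1)\times\widehat{\Omega}$ to estimates on $(0,T)\times\Omega$; this step relies on Assumption~\ref{ass: regular-single-patch-domain}, which guarantees piecewise bounded derivatives of $\vect{F}^{-1}$ of all orders so that Jacobian-type factors stay uniformly bounded.

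The main obstacle I expect is the identification (or construction) of a tensor-product quasi-interpolant with all the required properties simultaneously: preservation of homogeneous initial and boundary conditions (so that $\Pi_h u \in \mathcal{V}_h$), $H^2$-stability in space and $H^1$-stability in time, and optimal anisotropic Sobolev-norm approximation rates under only the mixed regularity $u\in (H^{q_t}\otimes H^2) \cap (H^1\otimes H^{q_s})$. The anisotropic regularity assumption is precisely tuned to what the tensor splitting yields; verifying that the stability constants of $\Pi_t$ do not inflate the expected rate, and that boundary-condition preservation is compatible with the isogeometric push-forward, is the technical heart of the argument.
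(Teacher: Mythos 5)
Your proposal is correct and follows essentially the same route as the paper: reduce to best approximation via the quasi-optimality theorem, bound $\|\cdot\|_{\mathcal{V}}$ by the $L^2\otimes L^2$, $H^1\otimes L^2$ and $L^2\otimes H^2$ contributions of $\mathbb{S}$, and apply anisotropic tensor-product approximation estimates for a projector $\Pi_h u\in\mathcal{V}_h$. The only difference is that the paper imports the projector and its three anisotropic bounds as a black box from the anisotropic approximation theory of Da Veiga et al.\ (generalized to $d+1$ dimensions), whereas you sketch its construction as $\Pi_t\otimes\Pi_s$ and rederive those bounds by the variable-wise splitting — which is precisely what that cited result provides, including the boundary/initial-condition preservation you flag as the technical heart.
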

\begin{proof}
  The result follows from the anisotropic error estimates developed in \cite{Da2012}.
  We report here only the main steps, since the proof is similar to the one of \cite[Proposition 4]{Montardini2018space}.
  The generalization of \cite[Theorem 5.1]{Da2012} to the $d+1$ dimensional case, gives the existence of a projection 
  $\Pi_h:\mathcal{V} \cap \left( H^{q_t} (0,T) \otimes H^{2}(\Omega) \right) \cap \left( H^{1} (0,T) \otimes H^{q_s}(\Omega) \right) \to \mathcal{V}_h$, 
  such that
  \begin{equation}\label{eq:stime_di_approssimazione}
    \begin{split}
    \| u - \Pi_hu\|_{ L^{2}  (0,T) \otimes L^{2}(\Omega ) }&\leq C_1  \left(h_t^{k_t-1} \| u \|_{ H^{k_t-1}  (0,T) \otimes L^{2}(\Omega ) } + h_s^{k_s-2} \| u \|_{ L^{2}  (0,T) \otimes H^{k_s-2}(\Omega ) } \right), \\
    \| u - \Pi_hu\|_{ H^{1}  (0,T) \otimes L^{2}(\Omega ) }&\leq C_2  \left(h_t^{k_t-1} \| u \|_{ H^{k_t}  (0,T) \otimes L^{2}(\Omega ) } + h_s^{k_s-2} \| u \|_{ H^{1}  (0,T) \otimes H^{k_s-2}(\Omega ) } \right), \\
    \| u - \Pi_hu\|_{ L^{2}  (0,T) \otimes H^{2}(\Omega ) }&\leq C_3  \left(h_t^{k_t-1} \| u \|_{ H^{k_t-1}  (0,T) \otimes H^{2}(\Omega ) } + h_s^{k_s-2} \| u \|_{ L^{2}  (0,T) \otimes H^{k_s}(\Omega ) } \right) .
    \end{split}
  \end{equation}
  From the following inequality 
  \begin{equation*}\strain{
    \begin{split}
      \|u-v_h\|_{\mathcal{V}}^2 &= \|u-v_h\|^2_{L^2(\mathcal{Q})} + \|\mathbb{S}(u-v_h)\|^2_{L^2(\mathcal{Q})}     \\
      &\leq \|u-v_h\|^2_{L^2(\mathcal{Q})} + 2 \|\partial_t(u-v_h)\|^2_{L^2(\mathcal{Q})}  + 2\nu \|\Delta (u-v_h)\|^2_{L^2(\mathcal{Q})}\\
      &\leq \| u - v_h\|_{ L^{2}  (0,T) \otimes L^{2}(\Omega ) }^2 
          + 2\| u - v_h\|_{ H^{1}  (0,T) \otimes L^{2}(\Omega ) }^2 
          + 2\nu\| u - v_h\|_{ L^{2}  (0,T) \otimes H^{2}(\Omega ) }^2,
    \end{split}}
  \end{equation*}  
  with the choice $v_h = \Pi_hu$, and by \eqref{eq:stime_di_approssimazione} with obvious upper bounds on the right hand side, it holds 
  $$
    \|u-\Pi_hu\|_{\mathcal{V}} \leq C \left( h_t^{k_t-1}\| u \|_{ H^{k_t}  (0,T) \otimes H^{2}(\Omega ) } + h_s^{k_s-2}\| u \|_{ H^{1}  (0,T) \otimes H^{k_s}(\Omega ) } \right),
  $$
  therefore, \eqref{eq:a-priori-error-bound} follows from Theorem \ref{thm:quasi-optimality}. 
\end{proof}
\begin{rmk}
  \label{rem:on-the-error-bound}
     In Theorem \ref{thm:quasi-optimality}, the degrees $p_t$,  $p_s$
  and the mesh-sizes $h_t$,  $h_s$ play a similar role. This motivates
  our   choice $p_t=p_s=:p$  and $h_t= h_s=:h$ for the numerical
  tests in Section \ref{sec:numerical-tests}. In this case, and if the
  solution $u$ is smooth, \eqref{eq:a-priori-error-bound} yields
  $h$-convergence of order $p-1$.  
\end{rmk}
  
\subsection{Discrete system} 
The least-squares space-time discretization \eqref{eq:ls-discrete-system} can be written as:
\begin{equation*}
   \text{Find }   u_h\in \mathcal{V}_h \text{ such that } \mathcal{A}(u_h,\mathbb{S} v_h) = \mathcal{F}(\mathbb{S} v_h) \quad \, \forall v_h\in  \mathcal{V}_h,
 \end{equation*} 
and in particular, for all $v_h \in \mathcal{V}_h$, we point out that 
\begin{equation*}\strain{}
  \begin{split}
    \mathcal{A}(u_h,\mathbb{S} v_h) &= \int_{\Omega}\int_{0}^T  \left(\mathbb{S} u_h \right) \overline{\left(\mathbb{S} v_h \right)} \,\dt \, \d\Omega \\
                                    &= \int_{\Omega}\int_{0}^T  \partial_t u_h \overline{\partial_t v_h} + \nu^2\Delta u_h \overline{\Delta v_h} + i\nu\partial_t\nabla u_h \cdot \overline{\nabla v_h} - i\nu\nabla u_h \cdot \overline{\partial_t\nabla v_h}\,\dt \, \d\Omega ,      
  \end{split}
\end{equation*}
and 
\[
\mathcal{F}(\mathbb{S} v_h )  = \int_{\Omega}\int_{0}^T f\,   \overline{\left(\mathbb{S} v_h \right)} \,\dt \, \d\Omega = \int_{\Omega}\int_{0}^T f\,   \overline{\left(i\partial_t v_h - \Delta v_h\right)} \,\dt \, \d\Omega.
\] 
Therefore, the linear system associated to \eqref{eq:ls-discrete-system} is
\begin{equation}
\mathbf{A}\mathbf{u} = \mathbf{f},
\label{eq:sys_solve}
\end{equation}
where  
$[\mathbf{A}]_{i,j}=\mathcal{A}\left( B_{j, \vect{p}}, \mathbb{S} (B_{i,  \vect{p}})\right)$ 
and $[\mathbf{f}]_{i}=\mathcal{F}\left( \mathbb{S} (B_{i,  \vect{p}})\right)$. 
The tensor-product structure of the isogeometric space \eqref{eq:disc_space} allows to write  
the system  matrix $\mathbf{A}$ as sum of Kronecker products of matrices as 
\begin{equation} 
\mathbf{A}  \   = \mathbf{M}_s  \otimes \mathbf{L}_t + \nu^2  \mathbf{B}_s \otimes \mathbf{M}_t +  \nu \mathbf{L}_s \otimes \left( \mathbf{W}_t + \mathbf{W}_t^* \right), \label{eq:syst_mat} 
\end{equation}
where   for $  i,j=1,\dots,N_s $
\begin{subequations}
\begin{equation}
\label{eq:space_mat}
  \begin{split}
    [ \mathbf{L}_s]_{i,j}  =  \int_{\Omega} \nabla  B_{i, p_s}(\vect{x})\cdot \nabla  &B_{j, p_s}(\vect{x}) \ \d\Omega,  \quad [ \mathbf{M}_s]_{i,j}  =  \int_{\Omega}  B_{i, p_s}(\vect{x}) \  B_{j, p_s}(\vect{x}) \ \d\Omega,\\
    [ \mathbf{B}_s]_{i,j}  &=  \int_{\Omega} \Delta  B_{i, p_s}(\vect{x}) \Delta  B_{j, p_s}(\vect{x}) \ \d\Omega.  
  \end{split}
\end{equation}
 
  while for $i,j=1,\dots,n_t$
  \begin{equation}
  \label{eq:time_mat}
    \begin{split}
      [ \mathbf{L}_t]_{i,j}  = \int_{0}^T   b'_{j,  {p}_t}(t)\,  &b'_{i,  {p}_t}(t) \, \dt, 
      \quad    
      [ \mathbf{M}_t]_{i,j} = \int_{0}^T\,  b_{i, p_t}(t)\,  b_{j, p_t}(t)  \, \dt , \\
      [ \mathbf{W}_t]_{i,j}  &= \mathrm{i}\int_{0}^T   b'_{j,  {p}_t}(t)\,  b_{i,  {p}_t}(t) \, \dt, 
    \end{split}
  \end{equation}

 \label{eq:pencils}
\end{subequations}

\subsection{Ultraweak space-time method}
Here we recall also the following ultraweak discretization that has been proposed in \cite{hain2022ultra}.
Let  $\mathcal{W}_h:= \mathcal{X}_{h,T}$ be the isogeometric space
with final conditions endowed with the $\|\cdot\|_{\mathcal{W}}$-norm, 
and fix $\mathcal{V}_h:= \mathbb{S}(\mathcal{W}_h)$ endowed with the $\|\cdot\|_{L^2(\mathcal{Q})}$-norm.
Notice that, $\forall v_h \in \mathcal{V}_h, \, w_h \in \mathcal{W}_h$, it holds 
\begin{equation*}
 \mathcal{A}(v_h,w_h) = \left(\mathbb{S}(v_h),w_h\right) = \left(v_h,\mathbb{S}(w_h)\right) - \mathrm{i}\left( v_h (\cdot,0) , {w}_h(\cdot,0) \right)_{L^2(\Omega)},
\end{equation*}
with $\left(\cdot , \cdot \right)_{L^2(\Omega)}$ denoting the complex scalar product in $L^2(\Omega)$. Therefore, introducing the sesquilinear form
\begin{equation}
 \label{eq:uw_sesquilinear_form}
 \mathcal{A}_{\mathtt{uw}}(v_h, w_h) := \left(v_h,\mathbb{S}(w_h)\right), \quad \forall v_h \in \mathcal{V}_h,\, w_h \in \mathcal{W}_h,
\end{equation}
we have the following ultraweak method for \eqref{eq:var_for}: 
\begin{equation}
\label{eq:uw-discrete-system}
 \text{Find }   u_h\in \mathcal{V}_h \text{ such that } 
   \mathcal{A}_{\mathtt{uw}}(u_h, w_h) = \mathcal{F}(w_h)  + \mathrm{i}\left( u_0 , {w}_h(\cdot,0) \right)_{L^2(\Omega)}
     \quad \, \forall w_h\in  \mathcal{W}_h,
\end{equation} 
where now the right hand side contains eventually the initial data $u_0$. 
As regards the well posedness and stability of \eqref{eq:uw-discrete-system} we refer to \cite{hain2022ultra}. 
\begin{rmk}
  The discrete system associated to \eqref{eq:uw-discrete-system} has the same Kronecker structure as \eqref{eq:syst_mat}, with appropriate final conditions at $T$.
\end{rmk}

\section{Fast solver for the parametric domain}
In this section we focus on the case $\vect{F} = {Id}$, that is $\mathcal{Q} = (0,T) \times (0,1)^d$ 
is the parametric domain in space and a finite interval in time direction. In this context we are able to introduce a stable and fast solver for problem \eqref{eq:ls-discrete-system}.
We introduce, for the system \eqref{eq:sys_solve}, the  preconditioner  
\begin{equation}\label{eq:prec_definition}
 \widehat{\mathbf{P}}:=  \widehat{\mathbf{M}}_s \otimes \mathbf{L}_t  + \nu^2   \widehat{\mathbf{L}}_s^{T}\widehat{\mathbf{M}}_s^{-1}\widehat{\mathbf{L}}_s  \otimes \mathbf{M}_t +\nu \widehat{\mathbf{L}}_s \otimes (\mathbf{W}_t + \mathbf{W}_t^*),
\end{equation}
where the matrices $\mathbf{L}_t,\mathbf{M}_t$ and $\mathbf{W}_t$ are defined in \eqref{eq:time_mat}, while $\widehat{\mathbf{L}}_s$ and $\widehat{\mathbf{M}}_s$ are   
\begin{equation*}
	\widehat{\mathbf{L}}_s = \sum_{l = 1}^{d} \widehat{\mathbf{M}}_d \otimes \dots \otimes \widehat{\mathbf{M}}_{l+1} \otimes \widehat{\mathbf{L}}_l \otimes \widehat{\mathbf{M}}_{l-1} \otimes \dots \otimes \widehat{\mathbf{M}}_1,
	\quad \text{and} \quad
	\widehat{\mathbf{M}}_s = \widehat{\mathbf{M}}_d \otimes \dots \otimes \widehat{\mathbf{M}}_1,
\end{equation*}
and for $l=1,\dots,d$, with indexes $i,j=1,\dots,n_{l}$, it holds 
$$
[\widehat{\mathbf{L}}_l]_{i,j} := \int\limits_{0}^{1}  \widehat{b}'_{j,p}(x_l) \widehat{b}'_{i,p}(x_l) \mathrm{d}x_l, 
\quad \text{ and } \quad 
[\widehat{\mathbf{M}}_l]_{i,j} := \int\limits_{0}^{1} \widehat{b}_{j,p}(x_l) \widehat{b}_{i,p}(x_l) \mathrm{d}x_l. 
$$

The efficient application of the proposed preconditioner, that is, the
solution of a linear system with matrix $ \widehat{\mathbf{P}}$, should
exploit  the structure highlighted above.   When the
pencils $ ({ \widehat{\mathbf{L}}_1},\widehat{\mathbf{M}}_1 ), \ldots,  ( { \widehat{\mathbf{L}}_d},\widehat{\mathbf{M}}_d  )$ 
admit a stable generalized eigendecomposition,   a possible approach is 
the Fast Diagonalization (FD) method, see \cite{Deville2002} and \cite{Lynch1964}  for details.
 
\subsection{Stable factorization of \mathinhead{( \widehat{\mathbf{L}}_l, \widehat{\mathbf{M}}_l)}{(Ll,Ml)} for \mathinhead{l=1,\dots,d}{l=1,...,d} }
\label{sec:stable_space}

The spatial stiffness and mass matrices ${  \widehat{\mathbf{L}}_l}$ and $\widehat{\mathbf{M}}_l$ are symmetric and positive definite for  $l=1,\dots,d$. Thus, the pencils 
  $ ({ \widehat{\mathbf{L}}_l}, \widehat{\mathbf{M}}_l )$ for  $l=1,\dots,d$ admit 
    the  generalized eigendecomposition  \[
      {  \widehat{\mathbf{L}}_l}\mathbf{U}_l =
\widehat{\mathbf{M}}_l\mathbf{U}_l\mathbf{\Lambda}_l,
\] where the matrices $
\mathbf{U}_l$ contain  in each column the
$\widehat{\mathbf{M}}_l$-orthonormal  generalized eigenvectors and 
  $\mathbf{\Lambda}_l$ are diagonal matrices whose entries contain the generalized eigenvalues.
Therefore we have for $l=1,\dots ,d$ the  factorizations 
\begin{equation}
\label{eq:space_eig}
\mathbf{U}^T_l {  \widehat{\mathbf{L}}_l} \mathbf{U}_l=
\mathbf{\Lambda}_l  \quad\text{ and  }\quad \mathbf{U}^T_l \widehat{\mathbf{M}}_l \mathbf{U}_l=
\mathbb{I}_{  n_{s,l}},
\end{equation}
 where $\mathbb{I}_{n_{s,l}}$ denotes the identity matrix of dimension $n_{s,l} \times
n_{s,l}$. 
{  Figure \ref{fig:autovettori_laplaciano} shows the shape of the generalized eigenvectors in $\mathbf{U}_l$, with associated eigenvalue in $\Lambda_l$, for a fixed univariate direction $l=1,\dots,d$ discretized with degree $p_s=3$ B-Splines and uniform partition.}
The stability of the decomposition is expressed by the condition number of the eigenvector matrix.
In particular  $\mathbf{U}^T_l \widehat{\mathbf{M}}_l \mathbf{U}_l=
\mathbb{I}_{  n_{s,l}}$ implies  that
 \[ \kappa_2 (\mathbf{U}_l) := 
      \|\mathbf{U}_l\|_{2 } \|\mathbf{U}_l^{-1}\|_{2 }  =
           \sqrt{\kappa_2(\widehat{\mathbf{M}}_l)},
\]  
where  $   \|\cdot\|_{ 2  }$ is the norm induced by the Euclidean vector norm. 
The condition number $\kappa_2(\widehat{\mathbf{M}}_l)$ has been
studied theoretically in \cite{gahalaut2014condition} and numerically in \cite{Montardini2018space} 
and it does not depend on the mesh-size,  but it depends on the polynomial degree.
Indeed,  we report  in Table \ref{tab:cond_number_space} the behavior of 
$\kappa_2 (\mathbf{U}_l)$   for different values of  spline degree $p_s$ 
and for different uniform discretizations  with  number of elements denoted by $n_{el}$.     
We observe  that $\kappa_2 (\mathbf{U}_l)$  exhibits a   dependence only on  
$p_s$, but stays moderately low for all low polynomial degrees that
are in the range of interest. 
\begin{figure}
  \centering
  \includegraphics[width = \textwidth]{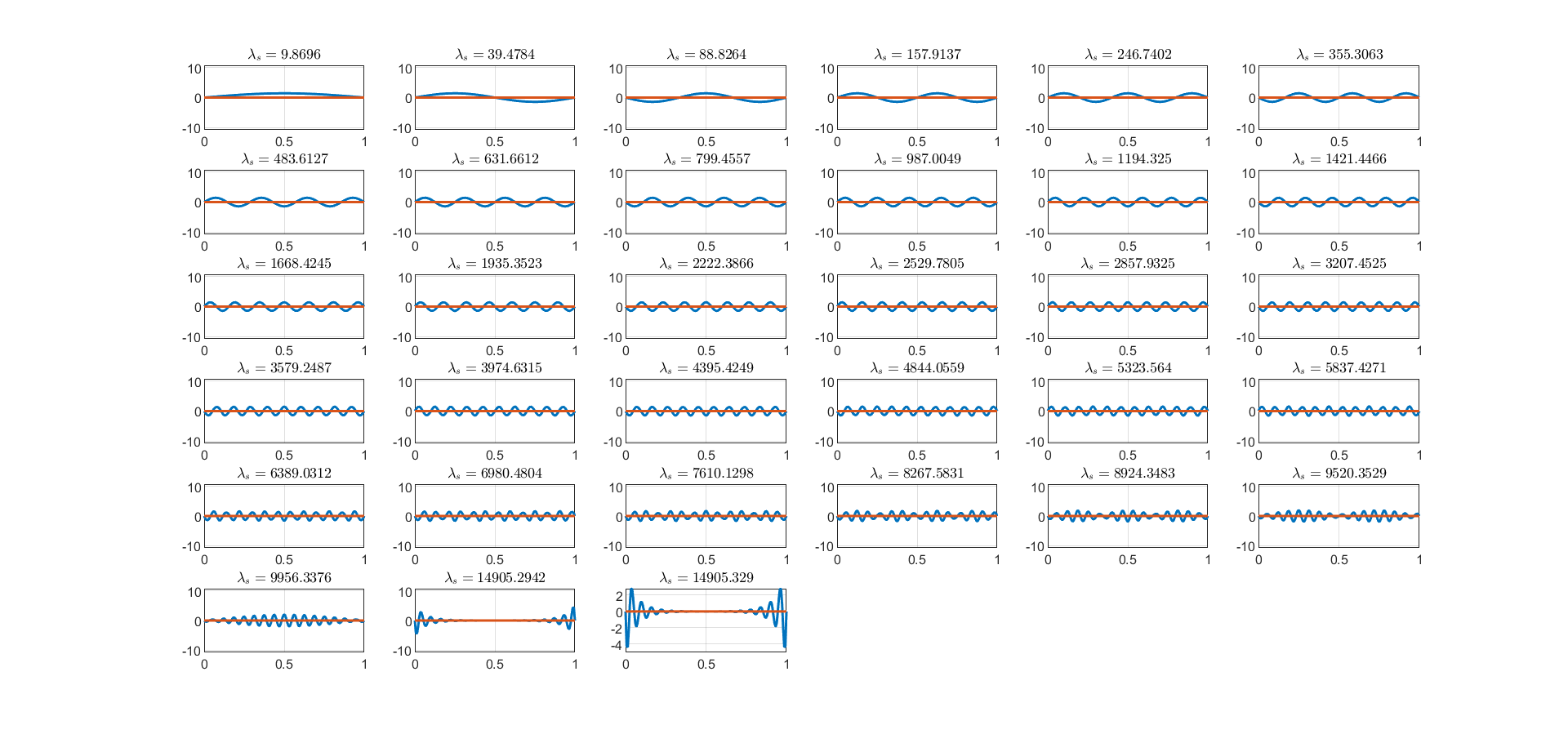}
  \caption{Generalized eigenvectors in space with associated eigenvalues for $p_s = 3$ and $n_{el}= 32$ elements. The real part is expressed in blue, while the imaginary part (null) is in red.}
  \label{fig:autovettori_laplaciano}
\end{figure}

{\renewcommand\arraystretch{1.2} 
\begin{table}[H]                                                        
	\centering                                                              
	\begin{tabular}{|c|c|c|c|c|c|c|c|}                                      
		\hline                                                                  
		 $n_{el}$ & $p_s=2$ & $p_s=3$ & $p_s=4$ & $p_s=5$ & $p_s=6$ & $p_s=7$ & $p_s=8$ \\                                              
		\hline                                                                  
		\z\z32 & $2.7 \cdot 10^0 $ & $ 4.5 \cdot 10^0$ & $ 7.6 \cdot 10^0$ & $ 1.3 \cdot 10^1$ & $ 2.1  \cdot 10^1$ & $ 3.5  \cdot 10^1$ & $ 5.7  \cdot 10^1$ \\   
		\hline                                                          
		\z\z64  & $ 2.7 \cdot 10^0 $ & $ 4.5 \cdot 10^0$ & $ 7.6 \cdot 10^0$ & $ 1.3  \cdot 10^1$ & $ 2.1  \cdot 10^1$ & $ 3.5 \cdot 10^1$ & $ 5.7  \cdot 10^1$ \\   
		\hline                                                          
		\z128 & $ 2.7 \cdot 10^0$ & $ 4.5 \cdot 10^0$ & $ 7.6 \cdot 10^0$ & $ 1.3 \cdot 10^1 $ & $ 2.1  \cdot 10^1$ & $ 3.5  \cdot 10^1$ & $ 5.7 \cdot 10^1$ \\   
		\hline                                                          
		\z256  & $ 2.7 \cdot 10^0$ & $ 4.5 \cdot 10^0$ & $ 7.6 \cdot 10^0$ & $ 1.3  \cdot 10^1$ & $ 2.1  \cdot 10^1$ & $ 3.5  \cdot 10^1$ & $ 5.7  \cdot 10^1$ \\  
		\hline                                                          
		\z512  & $ 2.7 \cdot 10^0$ & $ 4.5 \cdot 10^0$ & $ 7.6 \cdot 10^0$ & $ 1.3  \cdot 10^1$ & $ 2.1  \cdot 10^1$ & $ 3.5  \cdot 10^1$ & $ 5.7 \cdot 10^1$ \\   
		\hline                                                          
		1024  & $ 2.7 \cdot 10^0$ & $ 4.5 \cdot 10^0$ & $ 7.6 \cdot 10^0$ & $ 1.3  \cdot 10^1$ & $ 2.1  \cdot 10^1$ & $ 3.5  \cdot 10^1$ & $ 5.7 \cdot 10^1 $ \\ 
		\hline                                                                  
	\end{tabular}                                                           
\caption{$\kappa_2 (\mathbf{U}_l)$ for different polynomial degrees $p_s$ and number of elements $n_{el}$.}                                                                               
\label{tab:cond_number_space}               
\end{table}}

Moreover, in \cite{henning2022ultraweak} it is shown that there is spectral equivalence between $\widehat{\mathbf{B}}_s$ and $\widehat{\mathbf{L}}_s^T \widehat{\mathbf{M}}_s^{-1}\widehat{\mathbf{L}}_s $. 
We investigate numerically this spectral equivalence, and Figure \ref{fig:equivalenza_in_spazio} 
shows the eigenvalues of $(\widehat{\mathbf{L}}_s^T \widehat{\mathbf{M}}_s^{-1}\widehat{\mathbf{L}}_s )^{-1} \widehat{\mathbf{B}}_s $ are 
{  clustered and close to 1, for splines of degree $p=2,3,4$ and different uniform partitionas with $n_{el} = 8, 16, 32, 64, 128$}. In conclusion the spectral equivalence is stable under mesh refinement.

As regards the time pencils, the spectral equivalence between $\widehat{\mathbf{L}}_t$ and $\widehat{\mathbf{W}}_t^*\widehat{\mathbf{M}}_t^{-1}\widehat{\mathbf{W}}_t $
is unstable under mesh refinement, see Figure \ref{fig:equivalenza_in_tempo} where we performed the analogous test, therefore 
we kept the full structure of the time pencils in the preconditioner. 

\begin{figure}
  \centering
  \subfloat[]
  [Eigenvalues of $(\widehat{\mathbf{L}}_s^T \widehat{\mathbf{M}}_s^{-1} \widehat{\mathbf{L}}_s)^{-1}\widehat{\mathbf{B}}_s$ for different degrees $p$ and number of elements $n_{el}$. \label{fig:equivalenza_in_spazio}]
  {\includegraphics[width = 0.7\textwidth]{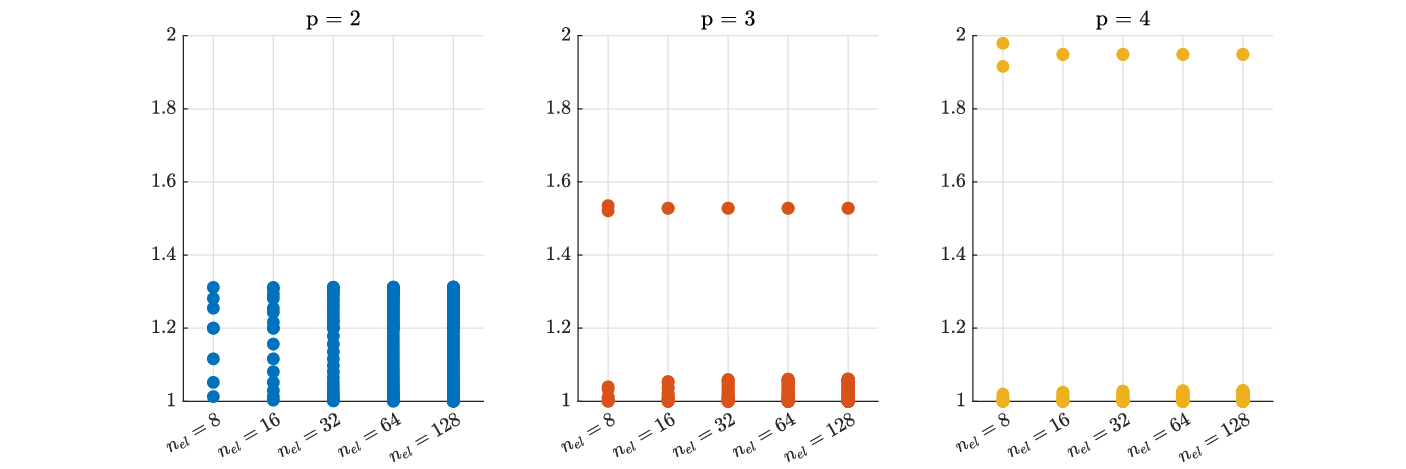}}\\
  \subfloat[]
  [Eigenvalues of $(\widehat{\mathbf{W}}_t^* \widehat{\mathbf{M}}_t^{-1} \widehat{\mathbf{W}}_t)^{-1}\widehat{\mathbf{L}}_t$ for different degrees $p$ and number of elements $n_{el}$. \label{fig:equivalenza_in_tempo}]
  {\includegraphics[width = 0.7\textwidth]{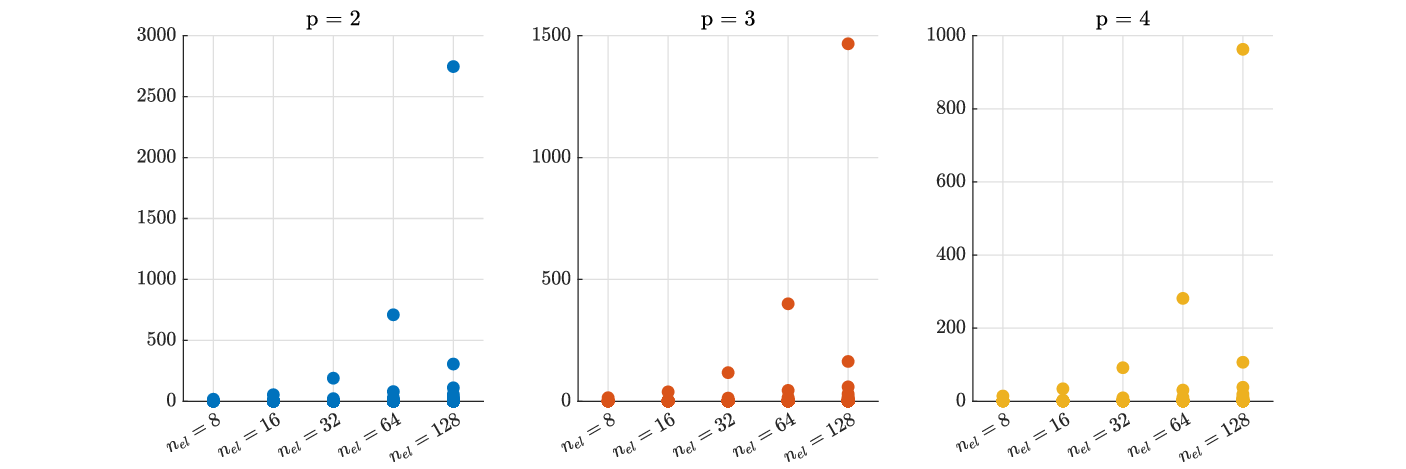}}
  \caption{Numerical investigation of spectral equivalence.}
  \label{fig:spectral_equivalence_in_preconditione_blocks}
\end{figure}


\subsection{Application of the preconditioner}
\label{sec:application_of_prec}
The application of the preconditioner involves the solution of the linear system 
\begin{equation}
\label{eq:prec_appl_2}
 \widehat{\mathbf{P}}\mathbf{s}=\mathbf{r},
\end{equation}
where $\widehat{\mathbf{P}}$ has the structure
\eqref{eq:prec_definition}.
We are able to efficiently solve system \eqref{eq:prec_appl_2} by the Fast Diagonalization method. 
The starting point, is the setup of the preconditioner, 
that is the factorizations \eqref{eq:space_eig} of  the  pencils
$({  \widehat{\mathbf{L}}_l}, \widehat{\mathbf{M}}_l)$ for  $l=1,\dots,d$.

Then, define $ \mathbf{U}_s:=\mathbf{U}_d\otimes\dots\otimes\mathbf{U}_1$ 
and $\mathbf{\Lambda}_s:=\sum_{l=1}^d \mathbb{I}_{n_{s,d}}\otimes\dots\otimes\mathbb{I}_{n_{s,l+1}} \otimes \mathbf{\Lambda}_l \otimes \mathbb{I}_{n_{s,l-1}}\otimes\dots\otimes\mathbb{I}_{n_{s,1}}$.
Notice that  $\widehat{\mathbf{M}}_s^{-1} = {\mathbf{U}}_s{\mathbf{U}}_s^T$, therefore 
the matrix $\widehat{\mathbf{L}}_s \widehat{\mathbf{M}}_s^{-1} \widehat{\mathbf{L}}_s $ admits the stable 
factorization
\begin{equation*}
  \label{eq:factorization_LML}
  \mathbf{U}^T_s \widehat{\mathbf{L}}_s \widehat{\mathbf{M}}_s^{-1} \widehat{\mathbf{L}}_s  \mathbf{U}_s = 
  \mathbf{\Lambda}_s^2.
\end{equation*}

The preconditioner $\widehat{\mathbf{P}}$ admits the following factorization
\begin{equation}\label{eq:preconditioner_str2}
  \widehat{\mathbf{P}} = 
    \left( \mathbf{U}_s^T \kron \mathbb{I}_{n_t}\right)^{-1} 
    \left(        \mathbb{I}_{N_s}       \kron \mathbf{L}_t 
          + \nu^2 \mathbf{\Lambda}_s^2   \kron \mathbf{M}_t 
          + \nu   \mathbf{\Lambda}_s     \kron \left(\mathbf{W}_t+\mathbf{W}^*_t\right)\right)
    \left(\mathbf{U}_s \kron \mathbb{I}_{n_t}\right)^{-1}.
\end{equation}
Note that the second factor in \eqref{eq:preconditioner_str2} that is 
\[ 
  \mathbf{H} := 
  \left(         \mathbb{I}_{N_s}       \kron \mathbf{L}_t 
        + \nu^2  \mathbf{\Lambda}_s^2   \kron \mathbf{M}_t 
        + \nu    \mathbf{\Lambda}_s     \kron \left(\mathbf{W}_t+\mathbf{W}^*_t\right)
  \right)
\]
is sum of three Kronecker matrices, whose space factors are diagonal matrices. We have the following block diagonal structure 
\begin{equation*}
    \mathbf{H} = 
    \begin{bmatrix}
        \mathbf{H}_1  &        &  \\
                      & \ddots &  \\
                      &        & \mathbf{H}_{N_s}
    \end{bmatrix},
\end{equation*}
where  $\mathbf{H}_i$, for $ i=1,\dots,N_s$, are banded matrices with bandwidth $2p_t+1$ defined as 
\[
\mathbf{H}_i:= \mathbf{L}_t +{  \nu^2} [\mathbf{\Lambda}_s^2 ]_{i,i} \mathbf{M}_{n_t} + \nu [\mathbf{\Lambda}_s ]_{i,i} \left(\mathbf{W}_t+\mathbf{W}^*_t\right).
\]
In order to invert $\mathbf{H}$, it is now sufficient to invert the following independent $N_s$ problems of size $n_t \times n_t$:   
\begin{equation}\label{eq:independent_problems}
  \mathbf{H}_i  \mathbf{x}_{i} = \mathbf{y}_i \quad \text{ for } i = 1,\dots, N_s.  
\end{equation}

Summarizing, the solution of \eqref{eq:prec_appl_2} can be computed  by the following algorithm.
\begin{algorithm}[H]
  \caption{Fast Diagonalization}\label{al:direct_P_ls}
  \begin{algorithmic}[1]
  \State Compute the factorizations \eqref{eq:space_eig}.   
  \State Compute  $ \mathbf{y} = (\mathbf{U}_s^T \otimes \mathbb{I}_t )\mathbf{r} $. 
  \State Compute  $ \mathbf{x}_i = \mathbf{H}_i^{-1} \mathbf{y}_i \quad \text{ for } i = 1,\dots, N_s. $
  \State Compute  $ \mathbf{s} = (\mathbf{U}_s \otimes \mathbb{I}_t)\ \widetilde{\mathbf{s}}. $
  \end{algorithmic}
\end{algorithm}

We conclude with the following remark for a possible parallel implementation of Algorithm \ref{al:direct_P_ls}.
\begin{rmk}\label{rm:time_and_space}
  The decision to consider time as the first variable allows us to write the matrix $\mathbf{H}$ in a block diagonal form. 
  In view of an efficient parallel implementation, this natural diagonal block structure does not require data shuffling, 
  decreasing the communication cost between nodes.
\end{rmk}

\subsection{Computational cost and memory requirements}
In this section we discuss the computational costs and memory requirements in the implementation of Algorithm \ref{al:direct_P_ls}.
First, notice that the matrix $\mathbf{A}$ in \eqref{eq:syst_mat}  is symmetric positive definite therefore we choose Conjugate Gradients (CG) as linear solver for solving  the system \eqref{eq:sys_solve}.
Clearly,  the computational cost of each iteration of the CG solver depends on both the preconditioner setup and  
application cost. 

We assume for simplicity that, for each univariate direction $l=1,\dots,d$, the space matrices have dimension $n_s \times n_s$, while the time 
matrices involved in the preconditioners, have dimension $n_t \times n_t$.  
Thus the total number of degrees-of-freedom is $N_{dof} = N_sn_t = n_s^dn_t$.  

The setup  of $\widehat{\mathbf{P}}$ includes the operations performed in Step 1 of Algorithm \ref{al:direct_P_ls}, 
i.e. $d$ spatial eigendecompositions, that have a total cost of $O(dn_s^3)$ FLOPs, and the construction of the block diagonal matrix $\mathbf{H}$, which costs $O(p_tn_tN_s) = O(p_tN_{dof})$. 
We remark that the setup of the preconditioners has to be performed only once, 
since the matrices involved do not change during the iterative procedure. 

The application of the preconditioner is performed by Steps 2-4 of Algorithm \ref{al:direct_P_ls}.
Exploiting \eqref{eq:kron_vec_multi},  Step 2 and Step 4 costs $O(dn_s^{d+1}n_t) = O(dn_sN_{dof})$ FLOPs.  
The cost of solving each sparse problem \eqref{eq:independent_problems} makes the cost for Step 3 equal to  $O(p_t^2 n_t N_{s}) = O(p_t^2 N_{dof})$ FLOPs.  

In conclusion, the total cost of Algorithm \ref{al:direct_P_ls} is $ O(d n_s^3) + O(d n_s N_{dof}) + O(p_t^2 N_{dof})$ FLOPs. 
The non-optimal dominant cost of Step 2 and Step 4 is determined by the dense matrix-matrix products.
However, these operations are usually implemented on modern computers in a very efficient way and  
the overall serial computational time grows almost as $O(N_{dof})$, see i.e. \cite{Montardini2018space,loli2020efficient}

The other dominant computational cost in a CG iteration is the cost of the residual computation. 
In Algorithm \ref{al:direct_P_ls}, this involves the multiplication of the matrix $\mathbf{A}$  with a vector. 
This multiplication is done by exploiting the special structure \eqref{eq:syst_mat}, that allows a matrix-free approach 
and the use of formula \eqref{eq:kron_vec_multi}. With the matrix-free approach, noting that the time matrices $\mathbf{L}_t, \mathbf{M}_t, \mathbf{W}_t$ 
are banded matrices with bandwidth $2p_t + 1$, and the spatial matrices $\mathbf{J}_s,\mathbf{L}_s,\mathbf{M}_s$ 
have a number of non-zeros per row equal to $(2p_s+1)^d$, the computational cost of a single matrix-vector product is 
$O(N_{dof} p^d)$ FLOPs, if we assume $p = p_s \approx p_t$. The dominant cost in the iterative solver is represented by the residual computation. 
This is a typical behaviour of the FD-based preconditioning strategies, see \cite{Montardini2018space,Sangalli2016,Montardini2018}.

We now investigate the memory consumption of the preconditioning strategy proposed.
For the preconditioner, we have to store the eigenvector spatial matrices, $\mathbf{U}_1, \dots, \mathbf{U}_d$, 
the diagonal matrices $\mathbf{\Lambda}_1, \dots, \mathbf{\Lambda}_d$ and the banded time pencils $\mathbf{L}_t,\mathbf{M}_t$ and $\mathbf{W}_t$ 
of size $n_t \times n_t$. The memory required is roughly 
\[ 
  O(d (n_s^2+n_s)) + O(p_t N_{dof}) + O(p_t n_t).
\]

For the system matrix $\mathbf{A}$, in addition to the time factors $\mathbf{L}_t,\mathbf{M}_t$ and $\mathbf{W}_t$, 
we need to store the spatial factors $\mathbf{M}_s,\mathbf{B}_s$ and $\mathbf{L}_s$. Thus the memory further required is roughly
\[
  O(p_s^d N_s) .
\] 

{ These numbers show that memory-wise our space-time strategy is very appealing when compared to other
approaches, even when space and time variables are discretized separately, e.g., with finite differences in
time or other time-stepping schemes. For example if we assume $d=3$, $p_t\approx p_s=p$ and $n_t^2\leq Cp^3N_s$, 
then the total memory consumption is $O(p^3N_s+N_{dof})$, 
that is equal to the sum of the  memory needed to store the Galerkin matrices associated to spatial variables 
and the memory needed  to store the solution of the problem.
}

We remark that we could avoid storing the factors of $\mathbf{A}$ by using the matrix-free approach of \cite{Sangalli2018}. 
The memory and the computational cost of the iterative solver would significantly improve, both for the setup and the matrix-vector multiplications. 
However, we do not pursue this strategy, as it is beyond the scope of this paper.

\section{Numerical Results}
\label{sec:numerical-tests}
This section is devoted to the computation of the solution of Schr{\"o}dinger problem \eqref{eq:problem}, and to its extension to non-homogeneous conditions, with the discretization proposed in \eqref{eq:ls-discrete-system}.
We first present the numerical experiments that assess the convergence behavior  of the least squares Petrov-Galerkin approximation and then we analyze the performance of the preconditioners.

We consider only sequential executions and we force the use of a single computational thread in a Intel Core i5-1035G1 processor, running at 1 GHz and with 16 GB of RAM.
 
The tests are performed with Matlab R2023a and GeoPDEs toolbox \cite{Vazquez2016}. 
We use  the \texttt{eig} Matlab function to compute the  generalized eigendecompositions present in  Step 1 of Algorithm \ref{al:direct_P_ls},
while  Tensorlab toolbox \cite{Sorber2014} is employed to perform the  multiplications with Kronecker matrices occurring in Step 2 and Step 4.  The  solution of the linear systems \eqref{eq:independent_problems} in Step 3
is performed pagewise by Matlab direct solver (pagewise backslash operator \texttt{pagemldivide}).
The linear system is solved by CG, with tolerance   equal to  $10^{-8}$  and with   the null vector as initial guess in all tests. 

\begin{figure}
  \centering
  \subfloat[]
  [\textit{Left - }Real part of the smooth gaussian solution computed with space-time splines discretization of degree $p=3$, over a uniform mesh  with $64$ elements in space and $128$ elements in time. \textit{Right - }Real part of the exact solution.\label{fig:soluzione_gaussiana}]
  {\includegraphics[width=0.9\textwidth]{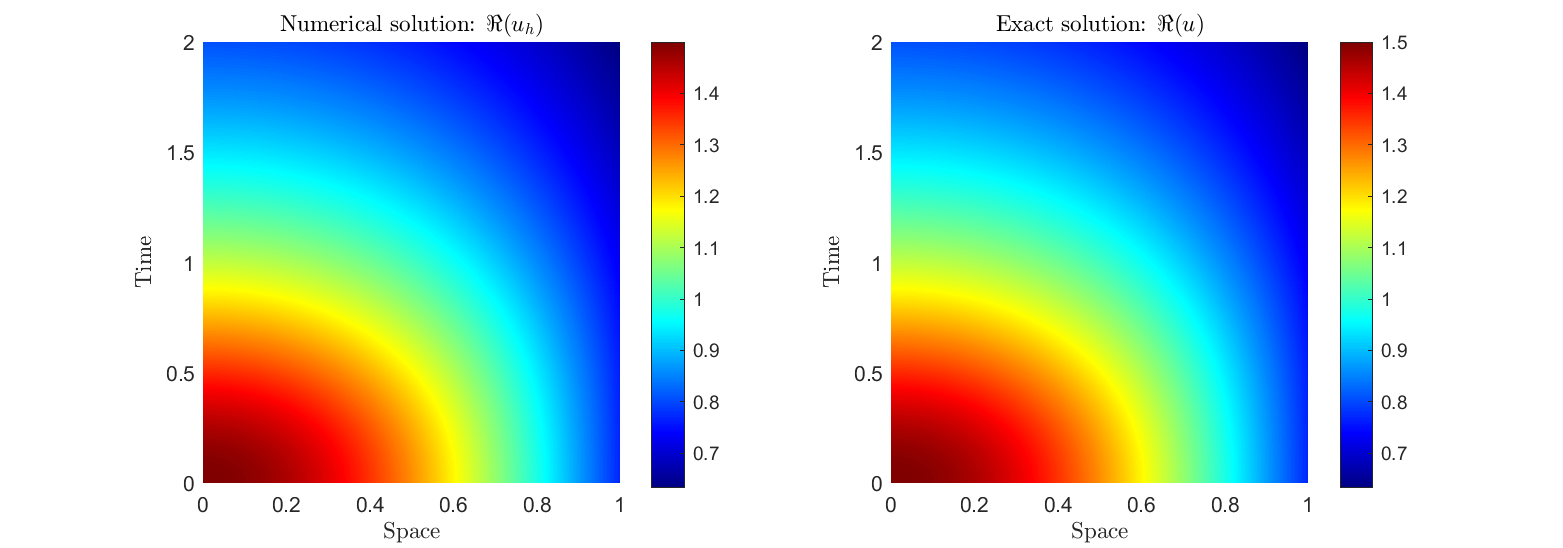} }
  \vskip 0.2cm
  \subfloat[]
  [Error convergence in ${\mathcal{V}}$-norm.\label{fig:convergenza_gaussiana}]
  {\includegraphics[width=0.45\textwidth]{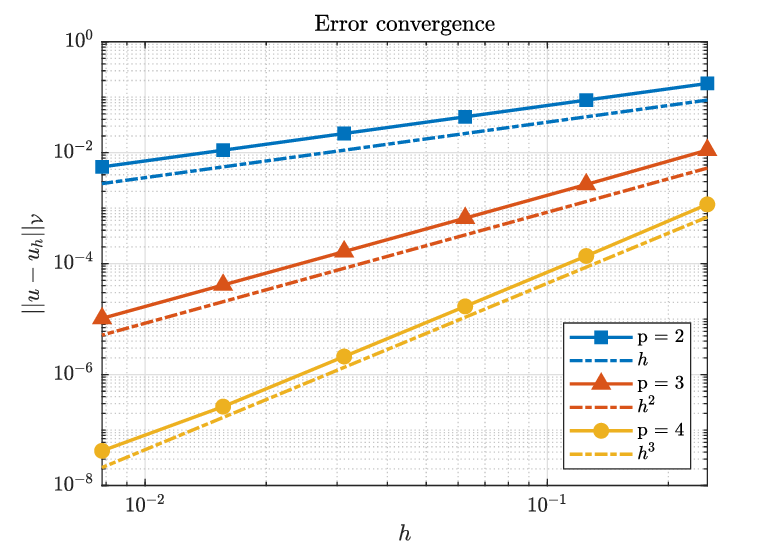} }
  \subfloat[]
  [Error convergence in ${L^2(\mathcal{Q})}$-norm.\label{fig:convergenza_gaussiana_solo_norma_l2}]
  {\includegraphics[width=0.45\textwidth]{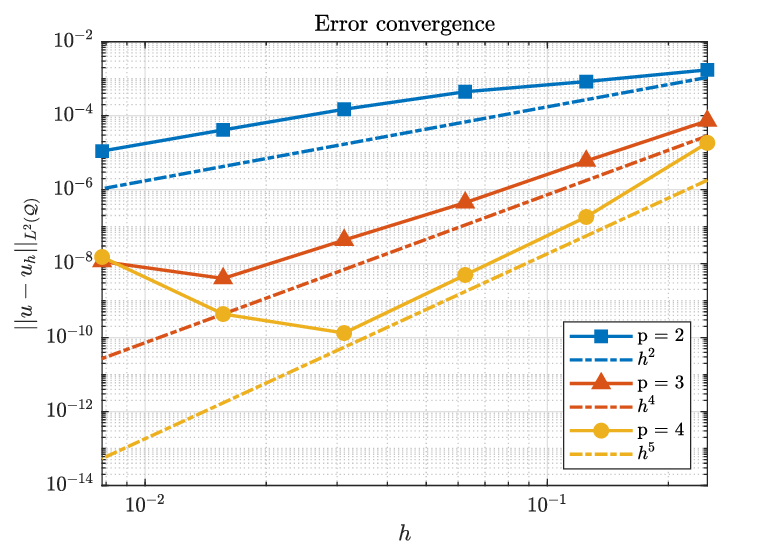} }
  \caption{Smooth solution and error convergence of the space-time least squares discretization.}
  \label{fig:soluzione_gaussiana_e_convergence}
\end{figure}

{ According to Remark \ref{rem:on-the-error-bound}, we use  the same
mesh-size in space and in time $h_s=h_t=:h$,  and use
splines of maximal continuity  and  same degree in
space and in time  $p_t=p_s=:p$.} For the sake of simplicity, we
also consider uniform knot vectors, and  denote the number of
elements in each parametric direction  by $n_{el}:=\frac{1}{h}$.

{  In out tables, the symbol $``\ast \ast"$  denotes  that the invertion of the matrix $\mathbf{A}$ in \eqref{eq:syst_mat}, 
  by Matlab direct solver backslash operator $``\backslash"$, requires more than 2 hours of computational time,
  while the symbol  $``\ast "$ indicates that the number of iterations in the CG solver exceeds the upper bound set to $200$ iterates. 
  We remark that  in all the tables the total solving time of the iterative strategies includes also the setup time of the considered preconditioner.
}  

\subsection{Orders of convergence} \label{sec:orders_conv}
Consider the Schr{\"o}dinger equation as modeled in \eqref{eq:problem}, for the space time domain 
$\mathcal{Q} = (0,T)\times (0,1)$, with $T=2$. The reference solution is the complex gaussian 
\begin{equation}
    u(t,x) = \dfrac{\alpha \beta }{\sqrt{\beta^2-\mathrm{i}\gamma t}} \exp \left\{ -\dfrac{x^2} {\beta^2-\mathrm{i}\gamma t} \right\},
\end{equation}
where $\alpha = \beta = 1.5$ and $\gamma = 2.5$. Here the Dirichlet 
boundary condition is $u|_{\Gamma_D}$, the initial condition is $u(0,x)$ and the right hand side is $f = A u$. 
The problem is discretized with a uniform mesh in both space and time directions. 
The solution for $p= 3$ is shown in Figure \ref{fig:soluzione_gaussiana}.  
In Figure \ref{fig:convergenza_gaussiana} it is shown the convergence analysis of the error under $h$-refinement
and for different polynomial degrees, for instance $p=2,\dots,4$. 
The errors are computed both with $\| \cdot \|_{\mathcal{V}}$-norm, for which the convergence Theorem \ref{thm:quasi-optimality} holds,
and with $\| \cdot \|_{L^2(\mathcal{Q})}$-norm, even if this case is not covered by theoretical results. 
For this smooth solution, the error study reveals optimal convergence under $h$-refinement in $\mathcal{V}$-norm, 
{ and the $L^2(\mathcal{Q})$ error is lower than the residual component in the $\mathcal{V}$-norm.}

\begin{figure}
  \centering
  \subfloat[]
  [\textit{Left - }Real part of the non-regular solution computed with space-time splines discretization of degree $p=4$, over a uniform mesh with $512$ elements in space and $1024$ elements in time. \textit{Right - }Real part of the exact solution.\label{fig:soluzione_demkowicz}]
  {\includegraphics[width=0.9\textwidth]{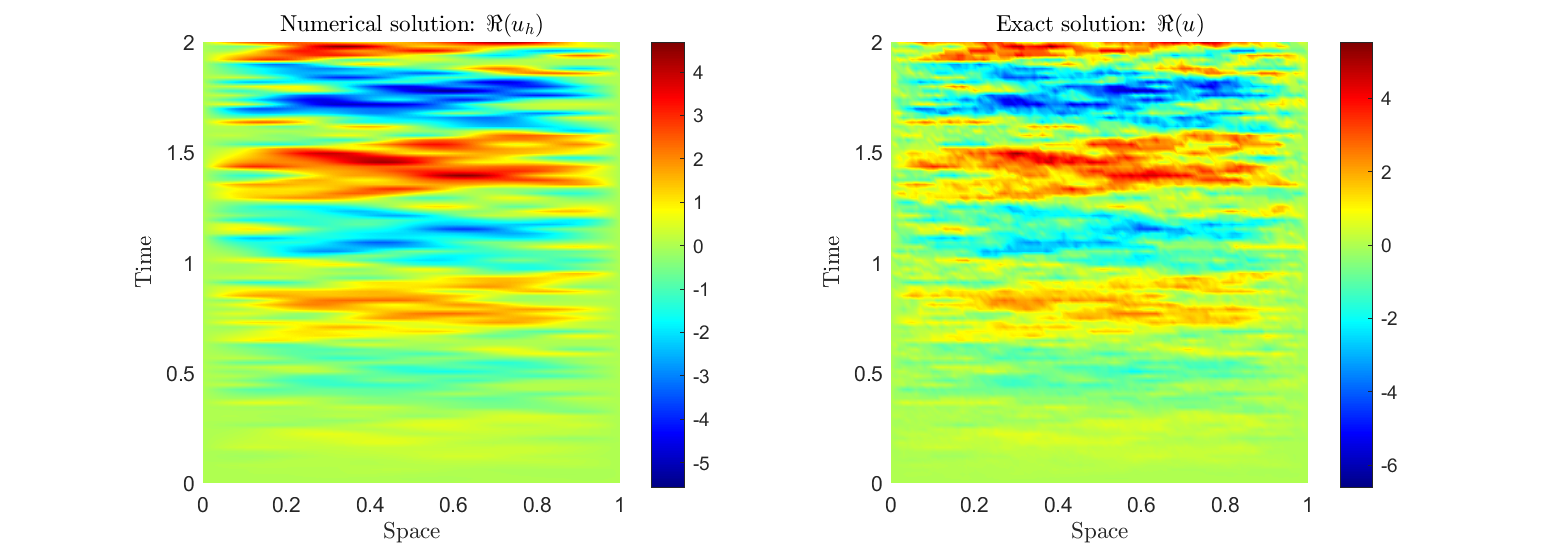} }
  \vskip 0.2cm
  \subfloat[]
  [Error convergence in ${\mathcal{V}}$-norm.\label{fig:convergenza_soluzione_demkowicz}]
  {\includegraphics[width=0.45\textwidth]{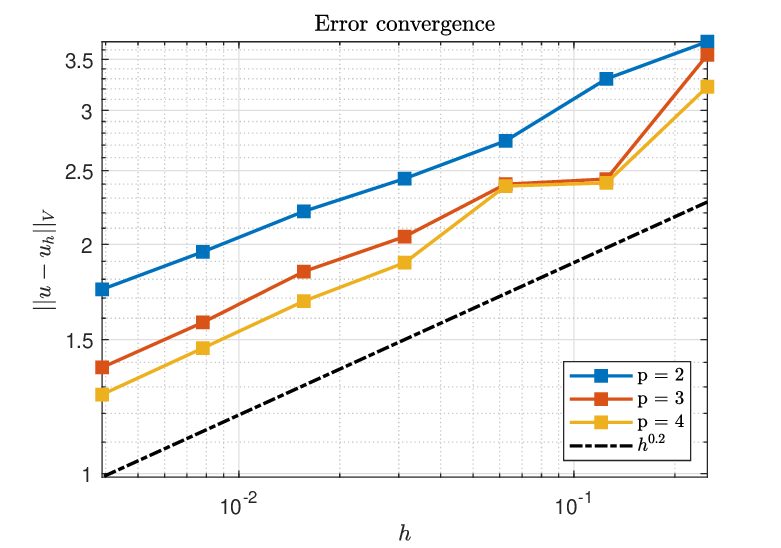} }
  \caption{Non-regular solution and error convergence of the space-time discretization.}
  \label{fig:soluzione_demkowicz_e_convergence}
\end{figure}

The second test considers the following example from \cite{DEMKOWICZ}.
Consider the space domain $\Omega = (0,1)$, the final time $T=2$, and the space-time domain $\mathcal{Q} = (0,T)\times (0,1)$. Homogeneous Dirichlet boundary 
conditions are considered on $\Gamma_D$. Let us denote by $e_k $ and $\omega^2_k$, which is, for $k = 1,2,\dots $, an eigenpair of
$$
-\Delta e_k = \omega^2_k e_k, \quad \text{a.e. in } \Omega.
$$   
By normalizing $e_k$ such that $\|e_k\|_{L^2(\Omega)} = 1$, we consider 
$
f(t,x) = \sum_{k=1}^{+\infty} f_k(t) e_k(x), 
$
where $f_k(t)$ are the Fourier coefficients of $f$ decomposed in the orthonormal 
basis $e_k$ at a given time $t$. By the following specific choice of coefficients
$$
f_k(t) = \dfrac{1}{k} \exp \left\{i\omega_k^2t\right\} \quad \text{for } k = 1,2,\dots,
$$
we considered as right hand side in \eqref{eq:problem} the following high mode truncated expansion
$$
f_M = \sum_{k=1}^M \dfrac{1}{k} \exp \left\{i\omega_k^2t\right\} e_k(x) ,
$$
with $M \gg 0$. Notice that, the solution to \eqref{eq:problem} with this specific right-hand side, is
$$
u(t,x) = \sum_{k=1}^{M} \dfrac{-it}{k} \exp \left\{ i\omega_k^2t\right\} e_k(x).
$$
We computed the solution for different polynomial degrees, on a uniform mesh, for an high mode right hand side 
$f_M$, with $M = 625$. In \ref{fig:soluzione_demkowicz} it is plotted the real part of the numerical solution for splines 
with degree $p=4$, together with the real part of the explicit solution. The solution of such a problem is non-regular and it can be shown that 
$u(t,\cdot) \in H^{1/2}(\Omega)$, while $u(\cdot,x) \in H^{1/4}(0,T)$. 
Figure \ref{fig:convergenza_soluzione_demkowicz} shows the error convergence for the high mode right hand side $f_M$, with $M = 652$ modes, 
that is optimal for each polynomial degree $p=2,3,4$.

\subsection{Performance of the preconditioner in the parametric domain}
The computational space domain is $\Omega = (0,1)^2$ and the space time domain is
$\mathcal{Q}= (0,T) \times \Omega$ with $T=1$. The reference solution is
a traveling wave, that is 
\begin{equation}\label{eq:numerical_solution_traveling_wave}
    u(t,\boldsymbol{x}) = a \exp \left\{-i \dfrac{|\boldsymbol{x}|^2 + t^2} {\omega^2}\right\},
\end{equation}
with wave number $\omega = 0.2$ and amplitude $a = \sqrt[4]{2/\omega^2}$. Here the Dirichlet boundary conditions
are $u|_{\Gamma_D}$. The right hand side is the following
\begin{equation*}
    f(t,\boldsymbol{x}) = a \left( \dfrac{4i}{\omega^2} + \dfrac{4|\boldsymbol{x}|^2}{\omega^4} + \dfrac{2t}{\omega^2} \right) \exp \left\{-i \dfrac{|\boldsymbol{x}|^2 + t^2} {\omega^2}\right\} . 
\end{equation*}
The numerical solution on a mesh of $64$ elements per univariate direction is shown in Figure \ref{fig:traveling_wave_solution} for different time frames. 
\begin{figure}   
  \centering 
  \subfloat[]
  [t=0 sec.]
  {\includegraphics[width = 0.35\textwidth]{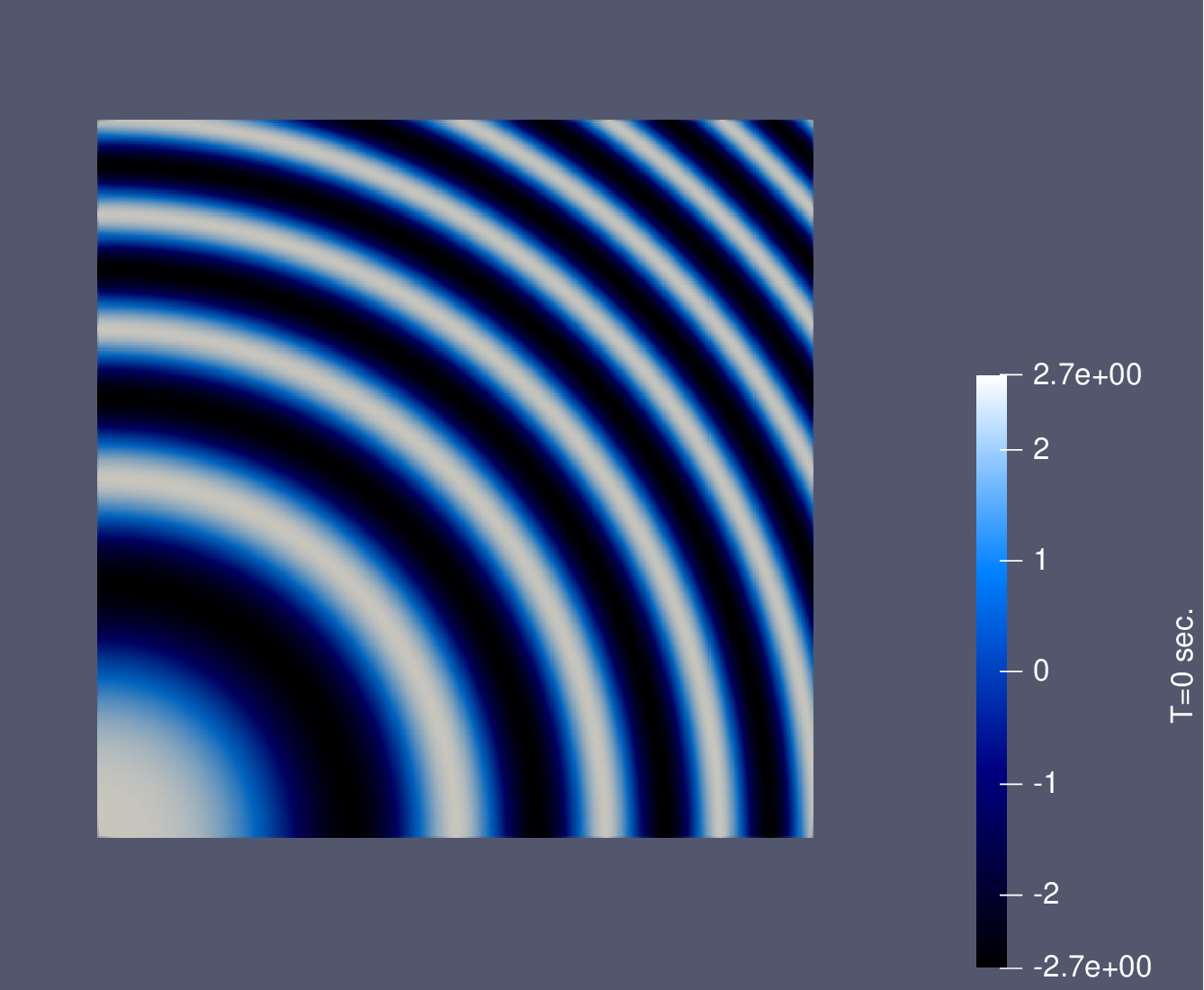}
  \label{figure_a} }
  \subfloat[]
  [t=0,25 sec.]
  {\includegraphics[width = 0.35\textwidth]{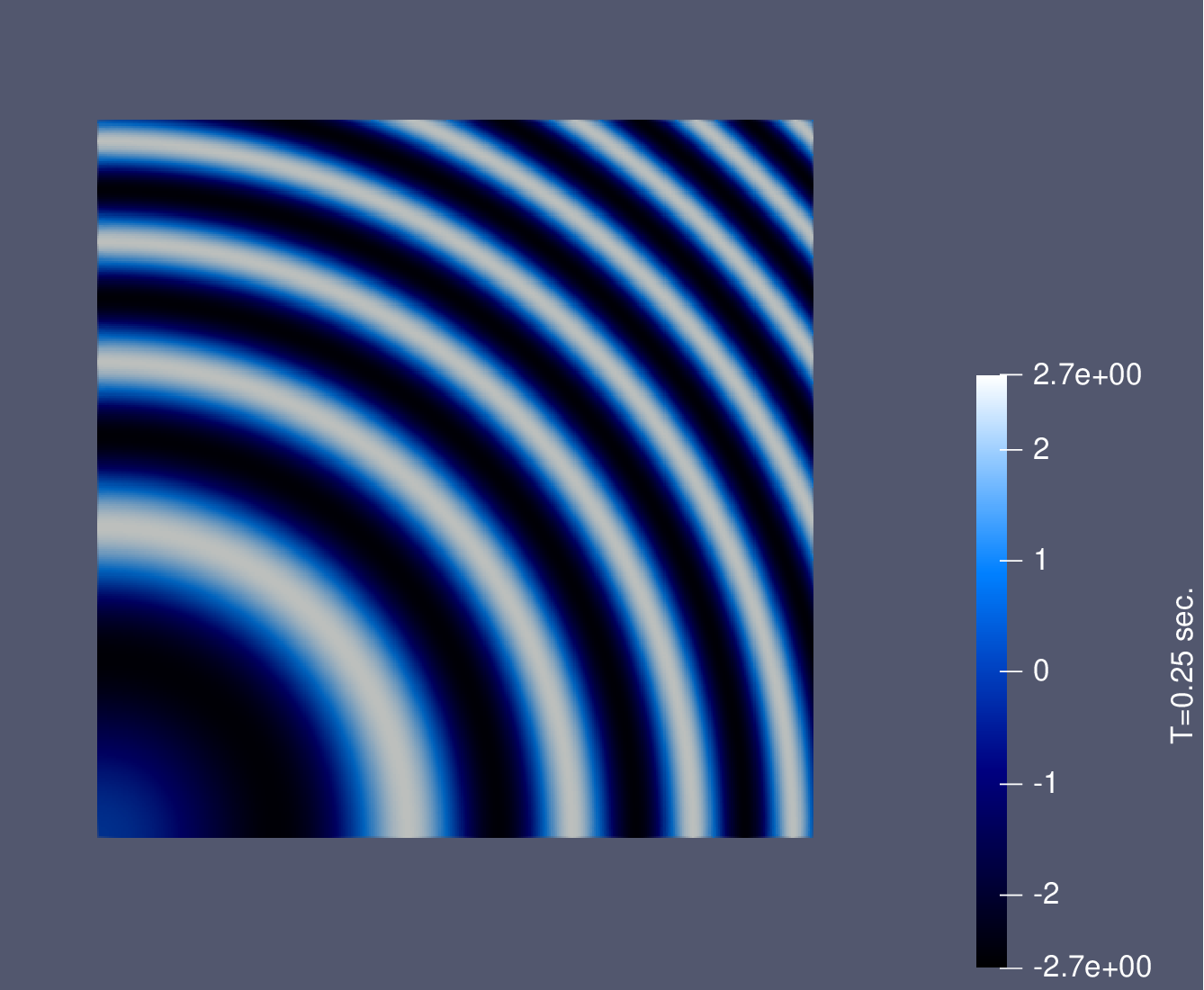} 
   \label{figure_b} }
  \\
  \subfloat[]
  [t=0,5 sec.]
  {\includegraphics[width = 0.35\textwidth]{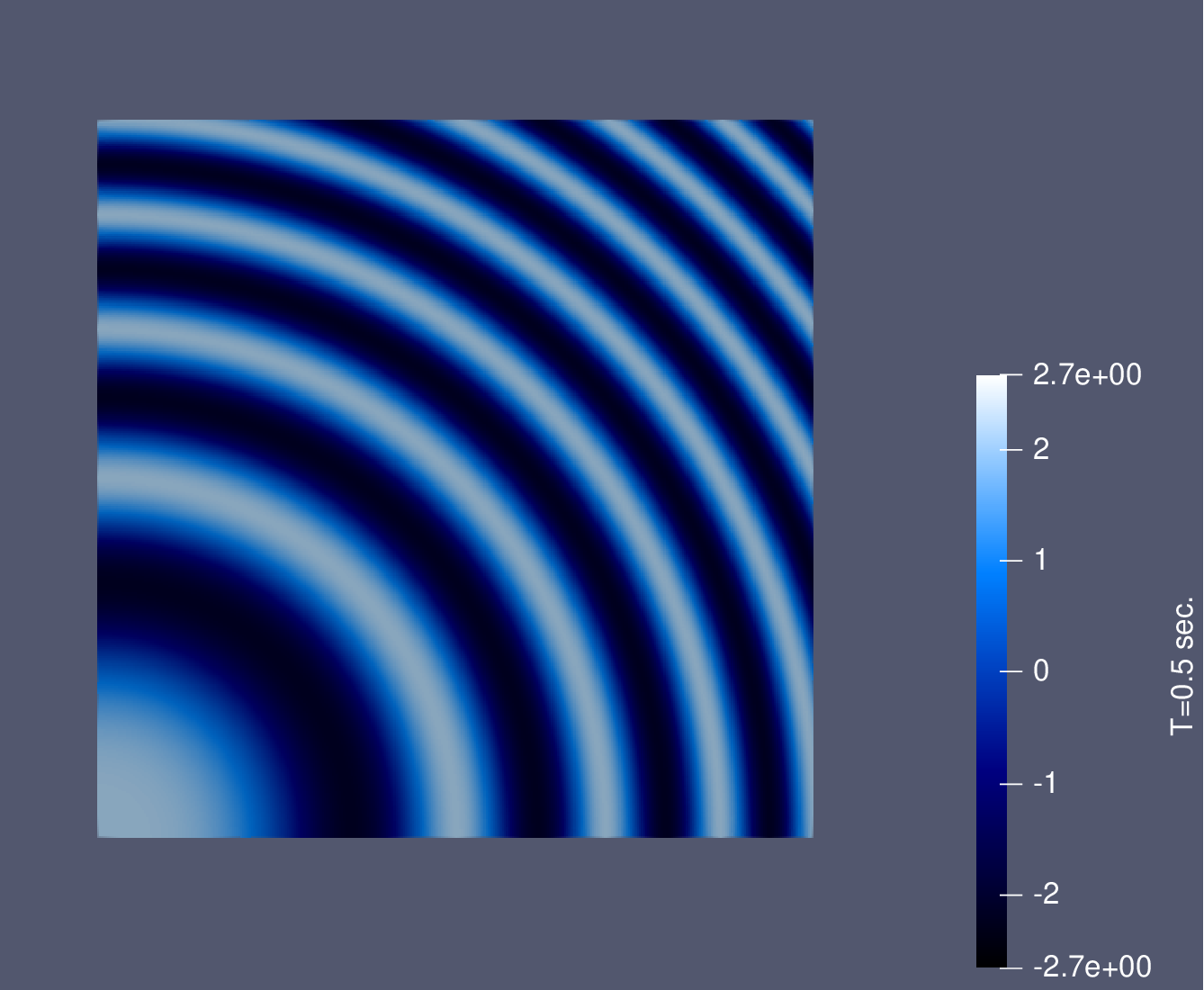}
   \label{figure_c} }   
  \subfloat[]
  [t=0,75 sec.]
  {\includegraphics[width = 0.35\textwidth]{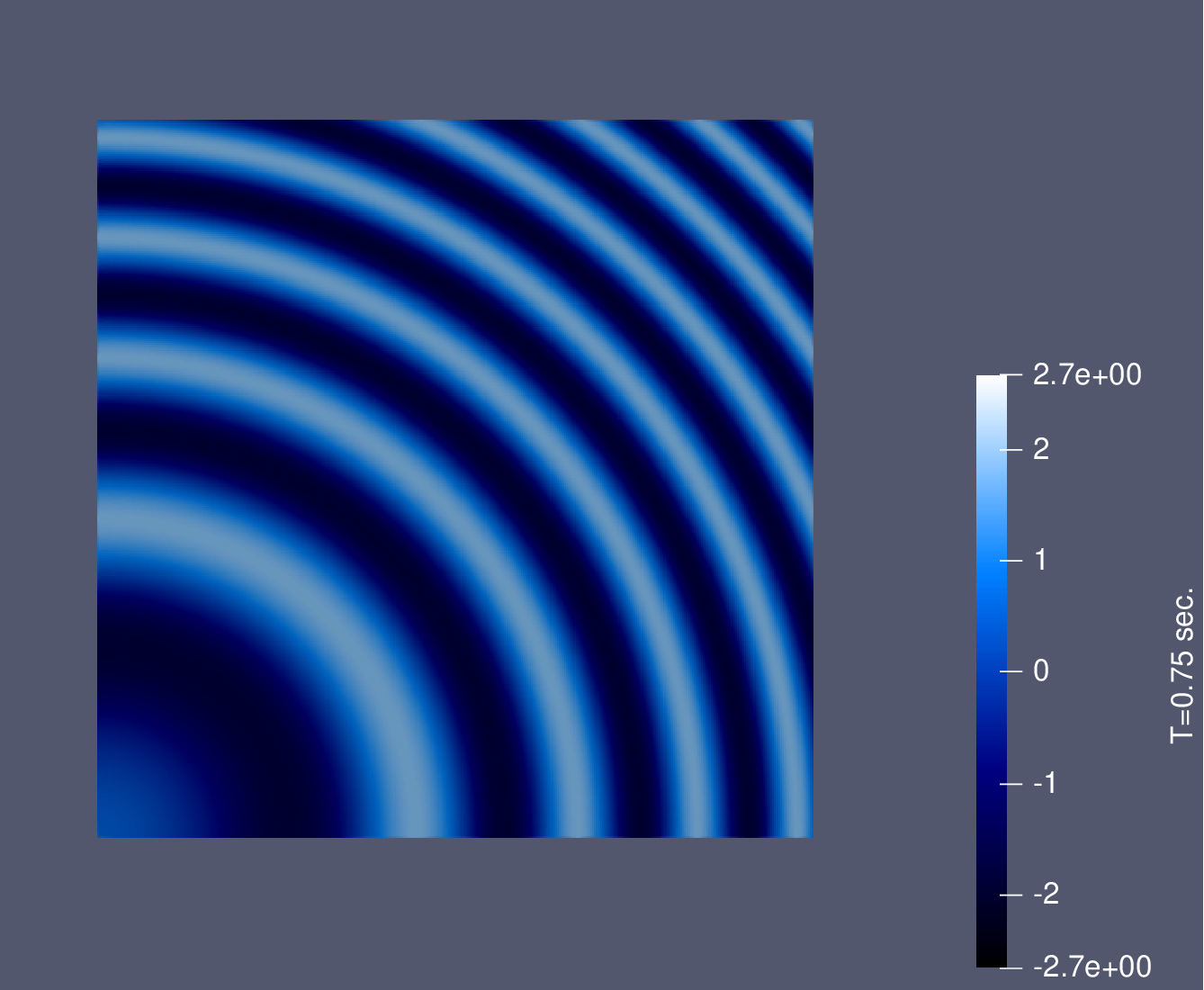}
   \label{figure_d} }
  \caption{Real part of the numerical approximation of \eqref{eq:numerical_solution_traveling_wave} at four different time frames.}
  \label{fig:traveling_wave_solution}
\end{figure}

\begin{table}
    \caption{Parametric domain. Performance of $\widehat{\mathbf{P}}$.}
    \centering
    \label{tab:performance of the preconditioner}
    \begin{tabular}{|r|c|c|c|c|}
        \hline
        \multicolumn{5}{|c|}{Performance of preconditioner} \\
        \hline
        $N_{e}$ & $N_{dof}$ & \texttt{backslash} (time) &$\widehat{\mathbf{P}}$ (iter / time) &  ICHOL (iter / time)\\
        \hline
        \multicolumn{5}{|c|}{Degree $p=2$} \\
        \hline
         8 & 1000   & 0.0306 &7\, / \, 0.0426&  12\, / \,0.0256 \\
        16 & 5832   & 0.7974 &7\, / \, 0.0659&  38\, / \,0.1338 \\
        32 & 39304  & 27.46  &7\, / \, 0.2992& 174\, / \,3.9153 \\
        64 & 287496 & 1148   &7\, / \, 2.7304& $^*$ \\
        \hline
        \multicolumn{5}{|c|}{Degree $p=3$} \\
        \hline
         8 & 1331   &0.0618  & 8\, / \, 0.0302 & 10\, / \,0.0172 \\
        16 & 6859   &1.6415  & 9\, / \, 0.0982 & 32\, / \,0.2884 \\
        32 & 42875  &118     & 8\, / \, 0.4703 &148\, / \,8.1482\, \\
        64 & 300763 &$^{**}$ & 8\, / \, 4.7142 &$^*$ \\
        \hline
        \multicolumn{5}{|c|}{Degree $p=4$} \\
        \hline
         8 & 1728   & 0.1587 & 10\, / \,0.0427  & 10\, / \,0.0548 \\
        16 & 8000   & 2.6433 & 10\, / \,0.2289  & 28\, / \,0.4363 \\
        32 & 46656  & 419    & 10\, / \,1.2329  &127\, / \,14.1742 \\
        64 & 314432 &$^{**}$ & 10\, / \,9.0219 &191\, / \, 176 \, \, \,  \\
        \hline
    \end{tabular}
\end{table}

We analyze the performance of the proposed preconditioner $\widehat{\mathbf{P}}$ for a variety of
uniform partitions up to $n_{el}=64$ per univariate directions, and polynomial degree $p=2,3,4$. 
In Table \ref{tab:performance of the preconditioner}
it is reported the computational clock time cost of solving the linear system both directly
using Matlab backslash and iteratively by CG solver, reporting also the number of iterations
for this latter case. When solving with CG we investigate the performance of the 
solver with preconditioner $\widehat{\mathbf{P}}$, and compare it with a
classical algebraic preconditioner as incomplete Cholesky factorization (ICHOL). 
Matlab backslash is clearly inefficient, since performing gaussian elimination requires $N_{dof}^3$ FLOPs. 
Using classical preconditioners in CG iterative solvers is a reasonable approach for small size problem, 
but the number of iterations grows with the size of the problem.
The performance of the preconditioner $\widehat{\mathbf{P}}$ with CG, 
is identical among $h$-refined meshes, and seams reasonably $p$-robust. 
The number of iterations never exceeds $10$, and the total amount of time required 
to solve the discrete problem, is always cheaper than the other approaches we tested.

\section{Conclusions}
 In this work we proposed and studied a space-time least square method for the Schr{\"o}dinger equation
 in the framework of isogeometric analysis. Our scheme is based on smooth spline in space and time, that allows,
 in the particular case of the parametric domain, to introduce a suitable preconditioner for the arising linear system. 
 Our preconditioner $\widehat{\mathbf{P}}$ is represented by a sum of Kronecker products of matrices, 
 that makes the computational cost of its construction (setup) and application, as well as the storage cost, very appealing.
 In particular the construction of the preconditioner exploits a spectral equivalence between the space matrices $\mathbb{B}_s$
 and $\mathbf{L}_s^T \mathbf{M}_s^{-1} \mathbf{L}_s $ that , thanks to the FD technique, admits a stable block-diagonal factorization.  
 
 The application cost for a serial execution is almost equal to $O(N_{dof})$, and the block-diagonal structure is suitable 
 for parallel implementation on distributed memory machines, and this will be an interesting future direction of study. 

{At the same time, the storage cost is roughly the same that we would have by discretizing separately in space and in
time, if we assume $n_t\leq Cp^dN_s$. Indeed, in this case the memory used for the whole iterative
solver is $O(p^dN_s + N_{dof} )$. Although, our approach could be coupled with a matrix-free idea, and this is expected to further
improve the efficiency of the overall method.}

As a final comment, it would be interesting to further exploits the structure of time pencils, in order to achive a full factorization of the proposed preconditioner.
This may also give a hint in proposing an ad-hoc preconditioner for the isogeometric framework, which we are still working on.

\appendix
\section{Well-posedness of the space-time variational formulation}
Here we extend the results presented in \cite{DEMKOWICZ} on the well posedness of \eqref{eq:var_for}. First we introduce a suitable notation, such that this appendix can be read independently from the paper. 
Let us recall $\mathcal{Q} = (0,T)\times \Omega$,
with $\Omega \subset \mathbb{R}^{d}$, $d=1,2,3$, while $\Gamma_D = (0,T)\times \partial \Omega$. Consider $\Gamma_0 = \Gamma_D \cup (\{0\}\times \Omega)$ 
and $\Gamma_T = \Gamma_D \cup (\{T\}\times \Omega)$ and let us define $\mathcal{D}_0 := \left\{\phi \in C^{\infty}_0(\mathbb{R}^{d+1}) : \phi|_{\Gamma_0} = 0  \right\}$, 
which is the space of smooth functions of $\mathbb{R}^{d+1}$ with compact support such that restricted to $\mathcal{Q}$ satisfy both homogeneous Dirichlet and initial conditions.
Analogously define $\mathcal{D}_T := \left\{\psi \in C^{\infty}_0(\mathbb{R}^{d+1}) : \psi|_{\Gamma_T} = 0  \right\}$, that instead satisfies homogeneous Dirichlet and final conditions. 
Recall $\mathbb{S} := \mathrm{i}\partial_t -\nu \Delta $, and notice that integration by parts gives:
$$
\int_{\Omega}\int_{0}^T  \left(\mathbb{S} \phi \right) \overline{\psi} \,\dt \, \d\Omega = \int_{\Omega}\int_{0}^T \phi \left(\overline{\mathbb{S} \psi} \right) \,\dt \, \d\Omega, \quad \forall \phi \in \mathcal{D}_0, \text{ and } \forall \psi \in \mathcal{D}_T.
$$
The space $\mathcal{V}$ in \eqref{eq:var_for} is the domain of $\mathbb{S}:\mathcal{V}\subset L^2(\mathcal{Q})\to L^2(\mathcal{Q})$, that can be written as:
\begin{equation}\label{eq:def_dom_S}
  \mathcal{V}:=\left\{ v \in L^2(\mathcal{Q}) : \mathbb{S}v \in L^2(\mathcal{Q}) \text{ and } (\mathbb{S}\psi,v) - (\psi,\mathbb{S}v) = 0 
                      \,\,\forall \psi \in \mathcal{D}_T\right\},
\end{equation}
and we have $\mathcal{C}^{\infty}_0(\mathcal{Q}) \subset \mathcal{V} \subset L^2(\mathcal{Q})$, that is $\mathbb{S}$ is densely defined.
Denoting by $\mathbb{S}^*:\mathcal{V}^* \subset L^2(\mathcal{Q})\to L^2(\mathcal{Q})$ the adjoint operator, whose domain is given by 
\begin{equation}\label{eq:def_dom_S*}
  \mathcal{V}^*:=\left\{ w \in L^2(\mathcal{Q}) : \exists g \in L^2(\mathcal{Q}) \text{ such that } (\mathbb{S}v,w) = (v,g) \,\,\forall v \in \mathcal{V}\right\},
\end{equation}
we have $\mathbb{S}^*w := g$, with $\mathbb{S}^* = \mathbb{S}$, and $\mathcal{C}^{\infty}_0(\mathcal{Q}) \subset \mathcal{V}^* \subset L^2(\mathcal{Q})$.
Notice that we are identifying $L^2(\mathcal{Q})' \equiv L^2(\mathcal{Q})$ through Riesz isomorphism.
We endow both $\mathcal{V}$ and $\mathcal{V}^*$ with the norms $\|\cdot\|_{\mathcal{V}}$ and $\|\cdot\|_{\mathcal{V}^*}$ respectively, such that 
\[
  \| v \|_{\mathcal{V}}^2 := \| v \|_{L^2(\mathcal{Q})}^2 + \| \mathbb{S}v \|_{L^2(\mathcal{Q})}^2,  
\quad \text{and} \quad  
  \| v \|_{\mathcal{V}^*}^2 := \| v \|_{L^2(\mathcal{Q})}^2 + \| \mathbb{S}^*v \|_{L^2(\mathcal{Q})}^2.
\]
Define the boundary operators $B:\mathcal{V}\to(\mathcal{V}^*)'$ and $B^*:\mathcal{V}^* \to (\mathcal{V})'$, such that
\begin{subequations}
  \begin{equation}
    \langle Bv,w \rangle := (\mathbb{S}v,w)_{L^2(\mathcal{Q})} - (v,\mathbb{S}^*w)_{L^2(\mathcal{Q})}, 
  \end{equation}    
  \begin{equation}
    \langle B^*w,v \rangle := (\mathbb{S}^*w,v)_{L^2(\mathcal{Q})} - (w,\mathbb{S}v)_{L^2(\mathcal{Q})},
  \end{equation}
\end{subequations}
hold true for all $v,w \in L^2({\mathcal{Q}})$ such that $\mathbb{S}v,\mathbb{S}^*w \in L^2(\mathcal{Q})$. 
From \cite[Lemma A.2]{DEMKOWICZ}, we have 
\begin{equation}\label{eq:ortogonalita_v*_Bv}
  \mathcal{V}^* = (B(\mathcal{V}))^{\bot},
\end{equation}  
and, from \eqref{eq:def_dom_S}, it holds 
\begin{equation}
  \mathcal{V} = (B^*(\mathcal{D}_T))^{\bot}.
\end{equation}  
In particular, from \cite[Lemma 2.1]{DEMKOWICZ}, it holds $\mathcal{D}_0 \subset \mathcal{V}$ and $\mathcal{D}_T \subset \mathcal{V}^*$, and in addition
we make the following density assumption.
\begin{ass}\label{ass:density}
  We assume that $\overline{\mathcal{D}_0}^{\|\cdot\|_{\mathcal{V}}} = \mathcal{V}$ and $\overline{\mathcal{D}_T}^{\|\cdot\|_{\mathcal{V}^*}} = \mathcal{V}^*$.
\end{ass}
Under Assumption \ref{ass:density}, from \cite[Lemma 2.2]{DEMKOWICZ}, it holds 
\begin{equation}\label{eq:caratterizzazione_di_V*}
  \mathcal{V}^* = (B(\mathcal{V}))^{\bot} = (B(\mathcal{D}_0))^{\bot},
\end{equation}  
and 
\begin{equation}\label{eq:caratterizzazione_di_V}
  \mathcal{V} = (B^*(\mathcal{V}^*))^{\bot} = (B^*(\mathcal{D}_T))^{\bot}.
\end{equation}  

The next result states the well posedness of the space time variational formulation. 
\begin{thm}
  Under Assumption \ref{ass:density}, the linear Schr{\"o}dinger operator $\mathbb{S}: \mathcal{V}\to L^2(\mathcal{Q})$ is a continuous bijections, 
  that is problem \eqref{eq:var_for} is well posed.
\end{thm}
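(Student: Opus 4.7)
The plan is to verify the three ingredients needed for $\mathbb{S}:\mathcal{V}\to L^2(\mathcal{Q})$ to be a continuous bijection: continuity, being bounded below (equivalently, injectivity with closed range), and surjectivity. Continuity is immediate from the graph-type definition of $\|\cdot\|_{\mathcal{V}}$, since $\|\mathbb{S}v\|_{L^2(\mathcal{Q})}\le\|v\|_{\mathcal{V}}$ by construction, so the only real work is the lower bound and the surjectivity.

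The second and central step is to establish an energy estimate of the form
\begin{equation*}
  \|v\|_{L^2(\mathcal{Q})}\le C\,\|\mathbb{S}v\|_{L^2(\mathcal{Q})}\qquad\forall v\in\mathcal{V}.
\end{equation*}
By Assumption \ref{ass:density} it suffices to prove this on the dense subspace $\mathcal{D}_0\subset\mathcal{V}$, where classical manipulations are legitimate. For $v\in\mathcal{D}_0$, I would use the standard $L^2$-conservation identity for the Schr\"odinger operator: using $\mathbb{S}v=\mathrm{i}\partial_t v-\nu\Delta v$, integration by parts in space (the boundary terms vanishing because $v|_{\Gamma_D}=0$, and $\int_\Omega|\nabla v|^2$ being real), gives
\begin{equation*}
  \frac{\mathrm{d}}{\mathrm{d}t}\|v(t,\cdot)\|_{L^2(\Omega)}^2=-2\,\operatorname{Im}(\mathbb{S}v(t,\cdot),v(t,\cdot))_{L^2(\Omega)}.
\end{equation*}
Since $v(0,\cdot)=0$, integrating in time and applying Cauchy--Schwarz yields $\|v(t,\cdot)\|_{L^2(\Omega)}^2\le 2\|\mathbb{S}v\|_{L^2(\mathcal{Q})}\|v\|_{L^2(\mathcal{Q})}$ for every $t\in(0,T)$; a further integration in $t$ gives the estimate with $C=2T$. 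Extending this by density to all of $\mathcal{V}$ and combining with continuity produces $\|v\|_{\mathcal{V}}\le\sqrt{1+C^2}\,\|\mathbb{S}v\|_{L^2(\mathcal{Q})}$, which shows that $\mathbb{S}$ is an isomorphism onto its range and that this range is closed in $L^2(\mathcal{Q})$.

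The third step is to upgrade the closed range to all of $L^2(\mathcal{Q})$. By the orthogonal decomposition $L^2(\mathcal{Q})=\operatorname{Range}(\mathbb{S})\oplus\operatorname{Range}(\mathbb{S})^\perp$, which combined with \eqref{eq:caratterizzazione_di_V*} identifies the orthogonal complement as $\ker(\mathbb{S}^*)\subset\mathcal{V}^*$, surjectivity reduces to showing $\ker(\mathbb{S}^*)=\{0\}$. I would run the same argument for the adjoint: the operator $\mathbb{S}^*=-\mathrm{i}\partial_t-\nu\Delta$ acting on $\mathcal{V}^*$ is, modulo a time reversal, of exactly the same type, and the dense test class $\mathcal{D}_T$ (whose elements vanish at the \emph{final} time $t=T$) takes the role of $\mathcal{D}_0$. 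The same Lions-type energy identity, now integrated backwards from $T$, yields $\|w\|_{L^2(\mathcal{Q})}\le C\|\mathbb{S}^*w\|_{L^2(\mathcal{Q})}$ on $\mathcal{D}_T$, extended by the second half of Assumption \ref{ass:density} to all of $\mathcal{V}^*$; in particular $\mathbb{S}^*$ is injective and $\mathbb{S}$ is surjective.

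The main obstacle is the \emph{density step}: one has to justify that the a priori estimate proved on $\mathcal{D}_0$ actually transfers to $\mathcal{V}$, which is exactly the role of Assumption \ref{ass:density}, and one must be careful that the energy identity above makes sense trace-wise (pointwise in $t$ in $L^2(\Omega)$) for functions in the closure. A standard way around this is to approximate $v\in\mathcal{V}$ by a sequence $\{v_n\}\subset\mathcal{D}_0$ with $v_n\to v$ in $\|\cdot\|_{\mathcal{V}}$, write the estimate for $v_n$, and pass to the limit using only the $L^2(\mathcal{Q})$ norms that appear on both sides; the pointwise-in-$t$ identity is only needed to produce the integrated estimate, and does not need to persist in the limit. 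Everything else (closed range theorem, orthogonal decomposition) is routine once the two twin energy estimates are in hand.
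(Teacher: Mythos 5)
Your proposal is correct in outline, but it is worth noting that the paper does not actually prove this theorem: its ``proof'' is a one-line deferral to \cite[Theorem 2.4]{DEMKOWICZ}, with the remark that the density assumption enters only through \eqref{eq:caratterizzazione_di_V*} to establish injectivity. What you supply is a self-contained version of the classical argument --- the twin Lions-type energy estimates $\|v\|_{L^2(\mathcal{Q})}\le 2T\|\mathbb{S}v\|_{L^2(\mathcal{Q})}$ on $\mathcal{D}_0$ and the analogous backward-in-time bound on $\mathcal{D}_T$, transferred by density and combined with the closed range theorem --- and this is sound: the pointwise-in-$t$ identity is only used on smooth functions, the graph-norm completeness of $\mathcal{V}$ (needed for closedness of the range) holds because the defining conditions of $\mathcal{V}$ are stable under graph-norm limits, and $\operatorname{Range}(\mathbb{S})^{\bot}=\ker(\mathbb{S}^*)$ follows directly from the definition \eqref{eq:def_dom_S*} of the adjoint domain (not really from \eqref{eq:caratterizzazione_di_V*}, which you cite for this step). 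Three small corrections: first, with the complex scalar product used here $\mathbb{S}$ is formally self-adjoint, so $\mathbb{S}^*$ acts as $\mathrm{i}\partial_t-\nu\Delta$ on $\mathcal{V}^*$ rather than as $-\mathrm{i}\partial_t-\nu\Delta$; this does not damage your adjoint energy estimate, which only uses the imaginary part of $(\mathbb{S}^*w,w)$ and the vanishing of $w$ at $t=T$, but the sign should be fixed. Second, your route consumes both halves of Assumption \ref{ass:density} (density of $\mathcal{D}_0$ in $\mathcal{V}$ for injectivity, density of $\mathcal{D}_T$ in $\mathcal{V}^*$ for surjectivity), whereas the paper asserts that only the $\mathcal{D}_0$ half is needed; since the appendix proves both density statements anyway, this costs nothing, but it is a genuine difference in how much of the hypothesis is used. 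Third, the sign in your $L^2$-conservation identity depends on which argument of the scalar product is conjugated; only the modulus matters for the Cauchy--Schwarz step, so this is cosmetic.
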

The proof of well posedness is given in \cite[Theorem 2.4]{DEMKOWICZ}, and Assumption \ref{ass:density}, but more precisely \eqref{eq:caratterizzazione_di_V*}, 
is needed only to prove injectivity of $\mathbb{S}$. 
We now prove that, Assumption \eqref{ass:density} is verified for every integer $d\geq 1$. 
\begin{lem}
  Given $\mathcal{Q}=(0,T) \times \Omega $, with $\Omega = [0,1]^d$ and integer $d\geq 1$, then Assumption \eqref{ass:density} holds true.
\end{lem}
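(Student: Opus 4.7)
The plan is to exploit the tensor-product structure of $\mathcal{Q} = (0,T)\times(0,1)^d$, reducing the density question to a countable family of one-dimensional approximation problems in time, one per Dirichlet eigenmode. Let $\{e_k\}_{k\in\mathbb{N}^d}$ denote the $L^2(\Omega)$-orthonormal eigenbasis of the Dirichlet Laplacian on $\Omega=(0,1)^d$, explicitly $e_k(x)=2^{d/2}\prod_{j=1}^d\sin(k_j\pi x_j)$ with eigenvalues $\omega_k^2=\pi^2|k|^2$. The crucial feature of this basis is that each $e_k$ extends to a smooth function on $\mathbb{R}^d$ that vanishes on $\partial\Omega$. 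Fixing a cutoff $\chi\in C^\infty_c(\mathbb{R}^d)$ with $\chi\equiv 1$ on a neighborhood of $\overline{\Omega}$, I will prove density of $\mathcal{D}_0$ in $\mathcal{V}$; the companion statement for $\mathcal{D}_T$ in $\mathcal{V}^*$ then follows by the change of variable $t\mapsto T-t$, since $\mathbb{S}^*=\mathbb{S}$ formally.

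Given $v\in\mathcal{V}$, I would expand $v=\sum_k v_k(t)e_k(x)$ and $\mathbb{S}v=\sum_k g_k(t)e_k(x)$ with $\sum_k\|g_k\|^2_{L^2(0,T)}<\infty$. Testing the identity $\mathbb{S}v=\mathrm{i}\partial_t v-\nu\Delta v$ against $\theta(t)e_k(x)$ for $\theta\in C^\infty_c(0,T)$ yields the mode-wise ODE $\mathrm{i}v_k'+\nu\omega_k^2v_k=g_k$ in $\mathcal{D}'(0,T)$, so $v_k\in H^1(0,T)$ and hence continuous on $[0,T]$. The central step is then to extract the initial condition $v_k(0)=0$ from the defining condition of $\mathcal{V}$. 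I would substitute the separated test function $\psi(t,x):=\theta(t)\chi(x)e_k(x)$, with $\theta\in C^\infty_c(\mathbb{R})$, $\theta(T)=0$ and $\theta(0)\ne 0$, which lies in $\mathcal{D}_T$. Since $\chi\equiv 1$ on $\Omega$, one has $(\mathbb{S}\psi)|_{\mathcal{Q}}=(\mathrm{i}\theta'+\nu\omega_k^2\theta)e_k$, and integrating by parts in time on the left-hand side of $(\mathbb{S}\psi,v)=(\psi,\mathbb{S}v)$ produces the surplus boundary term $-\mathrm{i}\theta(0)\overline{v_k(0)}$, which must vanish for all admissible $\theta$, forcing $v_k(0)=0$.

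With each $v_k$ identified as an $H^1$-function with vanishing trace at $0$, the construction of approximants in $\mathcal{D}_0$ becomes routine. First, the Galerkin truncation $v_N(t,x):=\sum_{|k|\le N}v_k(t)e_k(x)$ satisfies, by Parseval, $\|v-v_N\|_\mathcal{V}^2=\sum_{|k|>N}(\|v_k\|_{L^2(0,T)}^2+\|g_k\|_{L^2(0,T)}^2)\to 0$ as $N\to\infty$. For each $k$ with $|k|\le N$, I would then pick $\phi_k\in C^\infty_c(\mathbb{R})$ with $\mathrm{supp}\,\phi_k\subset(0,\infty)$ and $\|\phi_k-v_k\|_{H^1(0,T)}$ arbitrarily small, using the standard density of smooth functions supported away from $t=0$ in $\{w\in H^1(0,T):w(0)=0\}$. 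Setting $\phi_N(t,x):=\chi(x)\sum_{|k|\le N}\phi_k(t)e_k(x)$ gives an element of $\mathcal{D}_0$; orthonormality of $\{e_k\}$ combined with the finiteness of the sum allows $\|v_N-\phi_N\|_\mathcal{V}$ to be made as small as desired, the $\omega_k^4$ weights appearing in $\|\mathbb{S}(v_N-\phi_N)\|^2$ being absorbed by sufficiently small mode-wise errors. A diagonal extraction produces the sought sequence.

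I expect the main obstacle to be the extraction of $v_k(0)=0$ from the abstract orthogonality condition defining $\mathcal{V}$: this hinges on the availability of tensor-product test functions $\theta(t)\chi(x)e_k(x)$ inside $\mathcal{D}_T$, which is possible precisely because the cube admits a Dirichlet eigenbasis that extends smoothly to $\mathbb{R}^d$. The remaining ingredients—Fourier truncation in space and one-dimensional mollification in time—are completely standard.
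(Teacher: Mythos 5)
Your proposal is correct, but it follows a genuinely different route from the paper. The paper's proof is an extension--mollification argument in the spirit of the $d=1$ case of Demkowicz et al.: it extends $v$ by \emph{odd reflection} across each face of the cube (so that the extension commutes with $\mathbb{S}$, which is verified by testing against $\mathcal{C}^\infty_0$ functions and using $\langle Bv, E'\varphi\rangle=0$), extends by zero and translates in time, and then mollifies, with the vanishing on $\Gamma_0$ of the approximants coming from the odd symmetry in space and the zero time-extension. You instead diagonalize the spatial part in the explicit Dirichlet eigenbasis $e_k$ of the cube, reduce the membership $v\in\mathcal{V}$ to the countable family of ODEs $\mathrm{i}v_k'+\nu\omega_k^2 v_k=g_k$ with $v_k\in H^1(0,T)$ and $v_k(0)=0$ (the trace condition correctly extracted from the boundary term produced by the separated test functions $\theta\chi e_k\in\mathcal{D}_T$), and then approximate by Galerkin truncation plus one-dimensional density of smooth functions supported in $(0,\infty)$. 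Both arguments exploit a special feature of the cube -- reflection symmetry in the paper, an explicit eigenbasis extending smoothly to $\mathbb{R}^d$ in yours -- and both feed equally well into the subsequent pullback argument for the mapped domain. Your approach has the added benefit of yielding, as a by-product, an explicit characterization of $\mathcal{V}$ as a weighted vector-valued Sobolev space over the eigenmodes; the paper's reflection argument is closer in form to extension techniques that do not presuppose knowledge of the spectrum. The only points you should make fully explicit in a write-up are (i) that the mode-wise identity is obtained from the defining orthogonality condition of $\mathcal{V}$ with the admissible test functions $\theta\chi e_k\in\mathcal{D}_T$ (rather than from ``testing the identity $\mathbb{S}v=\mathrm{i}\partial_t v-\nu\Delta v$'' with functions that are not compactly supported in $\mathcal{Q}$), which you do address, and (ii) that the passage from $\mathcal{D}_0$-density in $\mathcal{V}$ to $\mathcal{D}_T$-density in $\mathcal{V}^*$ uses time reversal \emph{combined with complex conjugation}, since $t\mapsto T-t$ alone sends $\mathbb{S}$ to $-\mathrm{i}\partial_t-\nu\Delta$; this is a cosmetic fix, not a gap.
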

\begin{proof}
 We prove that $\mathcal{D}_0 $ is dense in $\mathcal{V}$, the other stated density result is analogous. The case $d=1$ is in \cite[Theorem 3.1]{DEMKOWICZ}, 
 thus we fix an integer $d>1$. Consider $v\in \mathcal{V}$, first we extend $v$ to the whole $\mathbb{R}^{d+1}$ domain. 
 \begin{enumerate}
  \item \textit{Extending along space directions:}
        Let us extend the space-time domain among the space directions as follows.
        Denote by $\mathcal{Q}_{1,c}= \mathcal{Q}$, 
        $\mathcal{Q}_{1,l} := [0,T]\times [-1,0] \times [0,1]^{d-1}$ and  
        $\mathcal{Q}_{1,r} := [0,T]\times [1,2]  \times [0,1]^{d-1}$. 
        Analogously, for $i=2,\dots,d$, we introduce 
        $\mathcal{Q}_{i,c} := \bigcup_{j \in \{l,c,r\}} \mathcal{Q}_{i-1,j}$, then
        $\mathcal{Q}_{i,l} := [0,T]\times [-1,2]^{i-1} \times [-1,0] \times [0,1]^{d-i}$ and  
        $\mathcal{Q}_{i,r} := [0,T]\times [-1,2]^{i-1} \times [1,2]  \times [0,1]^{d-i}$, considering $[0,1]^{0}= \emptyset$. 
        Finally let us call $\Omega_{E}:= \bigcup_{j \in \{l,c,r\}} \mathcal{Q}_{d,j}$ the enlarged space-time cylinder.
        Then, we introduce the intermediate extension operators,  $E_i : \mathcal{Q}_{i,c} \to \mathcal{Q}_{i+1,c}$ for $i=1,\dots,d-1$,
        and $ E_d : \mathcal{Q}_{d,c} \to \mathcal{Q}_{E}$, such that  
        \[ 
          E_i f (t,\boldsymbol{x}) :=
          \begin{cases}
            -f(t,x_1,\dots,-x_i,\dots,x_d)  &(t,\boldsymbol{x}) \in \mathcal{Q}_{i,l},\\
             f(t,x_1,\dots,x_i,\dots,x_d)   &(t,\boldsymbol{x}) \in \mathcal{Q}_{i,c},\\
            -f(t,x_1,\dots,2-x_i,\dots,x_d) &(t,\boldsymbol{x}) \in \mathcal{Q}_{i,r}.
          \end{cases} 
        \]
        We denote the reverse operators by $E_i'$, defined as 
        $E_i'g(t,\boldsymbol{x}) = g(t,\boldsymbol{x})-g(t,x_1,\dots,-x_i,\dots,x_d)-g(t,x_1,\dots,2-x_i,\dots,x_d) $,
        for $(t,\boldsymbol{x})\in\mathcal{Q}_{i,c}$, and for $i = 1,\dots,d$. 
        The definitions are to be interpreted almost everywere, and finally our extension operator from $\mathcal{Q}$ to $\mathcal{Q}_E$ is 
        $E:= E_d \circ \dots, \circ E_1$, while its reverse operator from $\mathcal{Q}_E$ to $\mathcal{Q}$ is $E' = E_1'\circ \dots  \circ E_d'$.
        It is easy to see by a change of variable that 
        $$
          (Ef,g)_{L^2(\mathcal{Q}_E)} = (f,E'g)_{L^2(\mathcal{Q})}, \quad \forall f \in L^2(\mathcal{Q}), \forall g \in L^2(\mathcal{Q}_E).
        $$
        Next, we claim that 
        $$ 
          \mathbb{S}E v = E\mathbb{S} v ,\quad  \forall v \in \mathcal{V}. 
        $$
        Clearly, $Ev$ is in $L^2(\mathcal{Q}_E)$ and notice that $E'\mathbb{S}\varphi = \mathbb{S}E'\varphi$ for all $ \varphi \in \mathcal{C}^{\infty}_0(\mathcal{Q}_E)$. Therefore, it holds 
        \[\begin{split}
        \langle \mathbb{S}Ev, \varphi \rangle_{\mathcal{C}^{\infty}_0(\mathcal{Q}_E)} 
        &= (Ev,\mathbb{S}\varphi)_{L^2(\mathcal{Q}_E)} 
        = (v,E'\mathbb{S}\varphi)_{L^2(\mathcal{Q})} =\\
        &= (v,\mathbb{S}E'\varphi)_{L^2(\mathcal{Q})}
        = (\mathbb{S}v,E'\varphi)_{L^2(\mathcal{Q})} - \langle Bv, E'\varphi\rangle.
        \end{split}
        \]
        Now, since $E'\varphi|_{\Gamma_T}=0$, we have $E'\varphi \in \mathcal{V}^*$, and 
        thus $\langle Bv, E'\varphi\rangle =0$ by \eqref{eq:ortogonalita_v*_Bv}. 
        It follows that 
        $$
        \langle \mathbb{S}Ev, \varphi \rangle_{\mathcal{C}^{\infty}_0(\mathcal{Q}_E)} 
        = (\mathbb{S}v,E'\varphi)_{L^2(\mathcal{Q})} 
        = (E\mathbb{S}v,E\varphi)_{L^2(\mathcal{Q}_E)}, 
        $$
        completing the proof of the claim. We also conclude that $\mathbb{S}Ev $ is in $L^2(\mathcal{Q}_E)$ whenever $v\in \mathcal{V}$.
  \item \textit{Extending along time direction:}
        Let $\widetilde{E}$ denote the extension of $E$ by zero to $\mathbb{R}^{d+1}$, and $\tau_{\delta}$ be the translation operator in $t$ direction, i.e.,
        $\tau_{\delta}w(t,\boldsymbol{x}) = w(t-\delta, \boldsymbol{x})$.
        From \cite{brezis2011functional} it holds 
        \begin{equation}\label{eq:limite_delta}
          \lim_{\delta \to 0} \|\tau_{\delta}w-w\|_{L^2(\mathbb{R}^{d+1})}=0, \quad \forall w \in L^2({\mathbb{R}^{d+1}}).
        \end{equation}
        Introducing $\mathcal{Q}_{E,\delta} = (-\delta,T+\delta) \times (-1,2)^{d}$, by a change of variables, it holds 
        $$
          (\tau_{\delta}\widetilde{E}f,g)_{L^2(\mathcal{Q}_{E,\delta})} = (Ef,\tau_{-\delta}g)_{L^2(\mathcal{Q}_{E})},\quad \forall f \in L^2(\mathcal{Q}), \forall g \in L^2(\mathcal{Q}_{E,\delta}). 
        $$
        Denoting by $R_{\delta}$ the restriction operator of function on $\mathbb{R}^{d+1}$ to $\mathcal{Q}_{E,\delta}$, we now claim 
        that 
        $$
        \mathbb{S}R_{\delta}\tau_{\delta}\widetilde{E}v = R_{\delta}\tau_{\delta}\widetilde{E}\mathbb{S}v, \quad \forall v \in \mathcal{V}.
        $$
        The proof is analogous to the one in Step 1. Given $\varphi \in \mathcal{C}^{\infty}_0(\mathcal{Q}_{E,\delta})$ it holds 
        \begin{equation*}
          \begin{split}
            \langle \mathbb{S}R_{\delta}\tau_{\delta}\widetilde{E}v, \varphi \rangle_{\mathcal{C}^{\infty}_0(\mathcal{Q}_{E,\delta})} 
            &= (\tau_{\delta}\widetilde{E}v,\mathbb{S}\varphi)_{L^2(\mathcal{Q}_{E,\delta})} 
             = (Ev,\mathbb{S}\tau_{-\delta}\varphi)_{L^2(\mathcal{Q}_E)}\\
            &= (v,E'\mathbb{S}\tau_{-\delta}\varphi)_{L^2(\mathcal{Q})}
             = (v,\mathbb{S}E'\tau_{-\delta}\varphi)_{L^2(\mathcal{Q})}\\
            &= (\mathbb{S}v,E'\tau_{-\delta}\varphi)_{L^2(\mathcal{Q})} - \langle Bv, E'\tau_{-\delta}\varphi\rangle.
          \end{split}
        \end{equation*}
        Now, since $E'\tau_{-\delta}\varphi|_{\Gamma_T}=0$, we have $E'\tau_{-\delta}\varphi \in \mathcal{V}^*$, and 
        thus $\langle Bv, E'\tau_{-\delta}\varphi\rangle =0$ by \eqref{eq:ortogonalita_v*_Bv}. It follows that 
        $$
        \langle \mathbb{S}R_{\delta}\tau_{\delta}\widetilde{E}v, \varphi \rangle_{\mathcal{C}^{\infty}_0(\mathcal{Q}_{E,\delta})} 
        = (\widetilde{E}\mathbb{S}v,\tau_{-\delta}\varphi)_{L^2(\mathcal{Q}_E)}
        = (\tau_{-\delta}\widetilde{E}\mathbb{S}v,\varphi)_{L^2(\mathcal{Q}_{E,\delta})},
        $$ 
        which proves the claim.
  \item \textit{Mollify:}
        Consider the mollifier $\rho_{\epsilon} \in \mathcal{C}^{\infty}_0(\mathbb{R}^{d+1})$, defined by 
        $$\rho_{\epsilon}(t,\boldsymbol{x}):= \epsilon^{-d-1}\rho_1(\epsilon^{-1}t,\epsilon^{-1}x_1,\dots,\epsilon^{-1}x_d),\quad \text{for} \epsilon >0,$$
        where
        $$
        \rho_1 (t,\boldsymbol{x}) := 
        \begin{cases}
          k e^{-1/{(1-|(t,\boldsymbol{x})|^2)}}, & \text{if } |(t,\boldsymbol{x})|^2 < 1, \\
          0 & \text{if } |(t,\boldsymbol{x})|^2 \geq 1,
        \end{cases}
        $$
        with $|\cdot|$ denoting the Euclidean norm in $\mathbb{R}^{d+1}$, and $k$ is a constant chosen such that $\int_{\mathbb{R}^{d+1}}\rho_1=1$.
        Notice that, given $\delta>0$ small enough, i.e., $\delta<\min\{T/2,1/2\}$, the convolutions $v_{\epsilon}:=\rho_{\epsilon}\ast\tau_{\delta}\widetilde{E}v$ 
        and $s_{\epsilon}:= \rho_{\epsilon}\ast\tau_{\delta}\widetilde{E}\mathbb{S}v $ are smooth functions that satisfy 
        \begin{equation}\label{eq:limite_epsilon}
          \lim_{\epsilon\to0}\|v_{\epsilon} - \tau_{\delta}\widetilde{E}v\|_{L^2(\mathbb{R}^{d+1})} = 0,\quad \text{and} \quad  \lim_{\epsilon\to0}\|s_{\epsilon} - \tau_{\delta}\widetilde{E}\mathbb{S}v\|_{L^2(\mathbb{R}^{d+1})} = 0.
        \end{equation}
        Moreover, the smooth function $\mathbb{S}v_{\epsilon}$ need not coincide to $s_{\epsilon}$ everywere, but they coincide on $\mathcal{Q}$ whenever $\epsilon<\delta/2$. 
        Thus, consider $\delta = 3\epsilon$, and let $\epsilon<\min\{T/6,1/6\}$ go to zero. We have 
        \begin{subequations}
        \begin{equation*}
            \|\mathbb{S}v_{\epsilon}-\mathbb{S}v\|_{L^2(\mathcal{Q})}
            = \|s_{\epsilon}-\mathbb{S}v\|_{L^2(\mathcal{Q})}  
            \leq \|s_{\epsilon}-\tau_{\delta}\widetilde{E}\mathbb{S}v\|_{L^2(\mathbb{R}^{d+1})} + \|\tau_{\delta}\widetilde{E}\mathbb{S}v-\widetilde{E}\mathbb{S}v\|_{L^2(\mathbb{R}^{d+1})}        
        \end{equation*}
        and 
        \begin{equation*}
            \|v_{\epsilon}-v\|_{L^2(\mathcal{Q})}
            \leq \|v_{\epsilon}-\tau_{\delta}\widetilde{E}v\|_{L^2(\mathcal{Q})} + \|\tau_{\delta}\widetilde{E}v-\widetilde{E}v\|_{L^2(\mathcal{Q})}        
        \end{equation*}
        \end{subequations}
        Using \eqref{eq:limite_delta} and \eqref{eq:limite_epsilon}, it follows that  
        $$
          \lim_{\epsilon \to 0} \|v_{\epsilon}-v\|_{\mathcal{V}} = 0.
        $$
        To conclude, we examine the value of $v_{\epsilon}$ at the edges of the space-time cylinder.
        We have 
        $$
        \strain{
        v_{\epsilon}(t,0,x_2,\dots,x_d) = \int_{\mathbb{R}^{d+1}} \rho_{\epsilon}(t-\sigma,-r_1,x_2-r_2,\dots,x_d-r_d) \tau_{\delta}\widetilde{E}v(\sigma,r_1,\dots,r_d) \d r_1\dots\d r_d\d\sigma,
        }
        $$
        with the integrand the inner integral being the product of and even function $\rho_{\epsilon}$, with respect to $r_1$,
        and an odd function $\tau_{\delta}\widetilde{E}v$ of $r_1$. Thus $v_{\epsilon}(t,0,x_2,\dots,x_d)=0$ and the same holds for $v_{\epsilon}(t,1,x_2,\dots,x_d)$ and the other univariate space dierctions. Moreover
        since $\tau_{\delta}\widetilde{E}v$ is identically zero in a neighborhood of $(0,\boldsymbol{x})$, we conclude that $v_{\epsilon}|_{\Gamma_0} =0$.
 \end{enumerate}

\end{proof}
Next we extend this result to the isogeometric case domain.
\begin{thm}
  Given $\mathcal{Q}=(0,T) \times \Omega $, with $\Omega = \mathbf{F}([0,1]^d)$ and integer $d\geq 1$, then Assumption \eqref{ass:density} holds true.
\end{thm}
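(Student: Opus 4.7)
The plan is to reduce the general isogeometric case to the parametric cube case already handled in the previous lemma, by using the push-forward via $\mathbf{F}$. Given $v\in\mathcal{V}$ on $\mathcal{Q}=(0,T)\times\mathbf{F}([0,1]^d)$, define the pullback $\widehat{v}(t,\boldsymbol{\eta}):=v(t,\mathbf{F}(\boldsymbol{\eta}))$. By Assumption \ref{ass: regular-single-patch-domain} (smoothness of $\mathbf{F}$ and boundedness of derivatives of $\mathbf{F}^{-1}$), this is an $L^2$-isomorphism between $L^2(\mathcal{Q})$ and $L^2(\widehat{\mathcal{Q}})$ with $\widehat{\mathcal{Q}}=(0,T)\times(0,1)^d$, with equivalent norms. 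Under this pullback the Schr\"odinger operator $\mathbb{S}$ becomes $\widehat{\mathbb{S}}=\mathrm{i}\partial_t-\nu\widehat{L}$, where $\widehat{L}$ is a second-order elliptic operator with smooth coefficients depending on the Jacobian of $\mathbf{F}$. Correspondingly, $\mathcal{V},\mathcal{V}^{*},\mathcal{D}_0,\mathcal{D}_T$ are mapped to spaces $\widehat{\mathcal{V}},\widehat{\mathcal{V}}^{*},\widehat{\mathcal{D}}_0,\widehat{\mathcal{D}}_T$ associated to $\widehat{\mathbb{S}}$ and its formal adjoint on $\widehat{\mathcal{Q}}$.

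The key step is to show that $\widehat{\mathcal{D}}_0$ is dense in $\widehat{\mathcal{V}}$ in the graph norm of $\widehat{\mathbb{S}}$ (and analogously $\widehat{\mathcal{D}}_T$ in $\widehat{\mathcal{V}}^{*}$). I would adapt the three-step argument of the previous lemma. The time translation step and the mollification step carry over essentially verbatim, since they only use that translations and convolutions commute with any differential operator up to terms controlled in $L^2$, together with the standard continuity of translation in $L^2(\mathbb{R}^{d+1})$. The delicate step is the spatial reflection extension: instead of reflecting only the function, I would also extend the parametrization $\mathbf{F}$ by reflection from $[0,1]^d$ to $[-1,2]^d$, obtaining coefficients $a_{ij},b_i$ of the extended operator $\widehat{L}_E$ whose parities in each coordinate direction are determined by how many derivatives they multiply. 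With this choice one verifies the crucial commutation identity $E'\widehat{\mathbb{S}}_E\varphi=\widehat{\mathbb{S}}\,E'\varphi$ for test functions $\varphi$ on the enlarged parametric cube, which is exactly what is needed to replicate the cube argument; the boundary-operator orthogonality \eqref{eq:ortogonalita_v*_Bv} then gives $\widehat{\mathbb{S}}(E\widehat{v})=E(\widehat{\mathbb{S}}\widehat{v})$ on the extended domain.

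Once density holds at the parametric level, I push back by composing with $\mathbf{F}^{-1}$: if $\widehat{v}_\epsilon\in\widehat{\mathcal{D}}_0$ converges to $\widehat{v}$ in the graph norm of $\widehat{\mathbb{S}}$, then $v_\epsilon:=\widehat{v}_\epsilon\circ\mathbf{G}^{-1}$ is smooth, compactly supported, and vanishes on $\Gamma_0$, hence lies in $\mathcal{D}_0$; the norm equivalence between $\|\cdot\|_{\mathcal{V}}$ and the graph norm of $\widehat{\mathbb{S}}$ gives convergence in $\mathcal{V}$. The density of $\mathcal{D}_T$ in $\mathcal{V}^{*}$ is completely symmetric.

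The main obstacle will be the coefficient-extension step. One has to choose the reflection extension of $\mathbf{F}$ so that the principal-symbol coefficients $a_{ij}$ of $\widehat{L}$ extend with the correct parity (even for $i=j$, odd for mixed pairs involving the reflection axis, with corresponding rules for the lower-order terms), and then check that the identity used to move $\widehat{\mathbb{S}}$ across the reflection is preserved. Once this bookkeeping is done, the translation and mollification arguments of the previous lemma apply with at most cosmetic changes, because they rely only on $L^2$ continuity of translation/convolution and on the fact that the test functions can be pulled back to compactly supported smooth functions on $\widehat{\mathcal{Q}}$.
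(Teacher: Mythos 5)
Your overall strategy (pull back to the parametric cube, establish density there, push forward through $\mathbf{G}$) is the same as the paper's, but you diverge at the crucial point: the paper simply asserts that the pullback $\widehat v = v\circ\mathbf G$ belongs to the cube space of the preceding Lemma \emph{for the constant-coefficient operator} $\mathbb S=\mathrm i\partial_t-\nu\Delta$ and applies that Lemma as a black box, whereas you correctly observe that under the pullback $\mathbb S$ transforms into a variable-coefficient operator $\widehat{\mathbb S}=\mathrm i\partial_t-\nu\widehat L$, so that the Lemma is not directly applicable and its proof must be redone for $\widehat{\mathbb S}$. That observation is sound, and arguably more careful than the paper's one-line reduction. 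The price is that you must carry the whole reflection--translation--mollification machinery over to a variable-coefficient operator, and this is where the proposal has a genuine gap.

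Two steps you declare to carry over ``essentially verbatim'' in fact break. First, the mollification step of the Lemma hinges on the identity $\mathbb S(\rho_\epsilon\ast w)=\rho_\epsilon\ast(\mathbb S w)$, which holds \emph{only because} $\mathbb S$ has constant coefficients; for $\widehat{\mathbb S}$ convolution does not commute with the operator, and one needs a Friedrichs-type commutator estimate showing $[\widehat L,\rho_\epsilon\ast\,]w\to0$ in $L^2$. For a second-order operator this requires both coefficient regularity (at least Lipschitz) and control of first derivatives of $w$, neither of which is available from the graph norm $\|w\|_{L^2}+\|\widehat{\mathbb S}w\|_{L^2}$ alone. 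Second, the reflected extension of $\mathbf F$ to $[-1,2]^d$ is in general only continuous across the reflection faces, so the extended coefficients of $\widehat L_E$ (built from $D\mathbf F$ and its inverse) are discontinuous there; the parity bookkeeping you sketch then cannot deliver the commutation identity $E'\widehat{\mathbb S}_E\varphi=\widehat{\mathbb S}\,E'\varphi$ for all test functions on the enlarged cube, nor the conclusion $\widehat{\mathbb S}_E(E\widehat v)\in L^2$. You have correctly located the obstacle, but the plan as written does not overcome it: to complete the reduction one must either prove a genuine variable-coefficient version of the Lemma with these commutator estimates supplied, or justify directly (as the paper implicitly assumes without argument) that the pullback $\widehat v$ already lies in the domain \eqref{eq:def_dom_S} of the \emph{constant-coefficient} cube operator, including the integration-by-parts orthogonality condition.
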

\begin{proof}
  We prove that $\mathcal{D}_0 $ is dense in $\mathcal{V}$, the other stated density result is analogous.   
  Given $v\in \mathcal{V}$, recall $\mathbf{G}:[0,1]^{d+1} \to \mathcal{Q}$ is the parameterization of the space-time cilinder, such that $\mathbf{G}(\tau,\boldsymbol{\eta}):= (T\tau,\mathbf{F}(\boldsymbol{\eta}))=(t,\boldsymbol{x})$.
  Define $\widehat{v}:= v\circ  \mathbf{G}$. Clearly $\widehat{v} \in L^2([0,1]^{d+1})$, and $\mathbb{S}\widehat{v}\in L^2([0,1]^{d+1})$. Moreover $(\mathbb{S}v,\phi)_{L^2([0,1]^{d+1})} - (v,\mathbb{S}\phi)_{L^2([0,1]^{d+1})} = 0 $, 
  for all $\phi \in \mathcal{C}^{\infty}_0(\mathbb{R}^{d+1})$ such that $\phi|_{\mathbf{G}^{-1}(\Gamma_T)} = 0 $. 
  By applying Lemma 1, it exists $\{\widehat{v}_{\epsilon}\}_{\epsilon >0} \subset \mathcal{C}^{\infty}_0(\mathbb{R}^{d+1})$ such that $\widehat{v}_{\epsilon}|_{\mathbf{G}^{-1}(\Gamma_0)} = 0$, that satisfies 
  $$
  \lim_{\epsilon \to 0} \| \widehat{v}_{\epsilon} - \widehat{v} \|^2_{L^2([0,1]^{d+1})} + \| \mathbb{S}\widehat{v}_{\epsilon} - \mathbb{S}\widehat{v} \|^2_{L^2([0,1]^{d+1})} = 0.
  $$
  Therefore define $v_{\epsilon}:= \widehat{v}_{\epsilon} \circ \mathbf{G}^{-1}$ and notice that $\{v_{\epsilon}\}_{\epsilon>0} \subset \mathcal{D}_0$. Moreover, 
  by a change of variable, 
  $$
  \lim_{\epsilon \to 0} \| {v}_{\epsilon} - {v} \|^2_{\mathcal{V}} \leq C \lim_{\epsilon \to 0} \| \widehat{v}_{\epsilon} - \widehat{v} \|^2_{L^2([0,1]^{d+1})} + \| \mathbb{S}\widehat{v}_{\epsilon} - \mathbb{S}\widehat{v} \|^2_{L^2([0,1]^{d+1})} = 0.
  $$
  This completes the proof.
\end{proof}

\section*{Acknowledgments}

\bibliography{biblio_space_time}

\begin{thebibliography}{10}
\expandafter\ifx\csname url\endcsname\relax
  \def\url#1{\texttt{#1}}\fi
\expandafter\ifx\csname urlprefix\endcsname\relax\def\urlprefix{URL }\fi
\expandafter\ifx\csname href\endcsname\relax
  \def\href#1#2{#2} \def\path#1{#1}\fi

\bibitem{fried1969finite}
I.~Fried, Finite-element analysis of time-dependent phenomena., AIAA Journal
  7~(6) (1969) 1170--1173.

\bibitem{bruch1974transient}
J.~C. Bruch~Jr., G.~Zyvoloski, Transient two-dimensional heat conduction
  problems solved by the finite element method, International Journal for
  Numerical Methods in Engineering 8~(3) (1974) 481--494.

\bibitem{oden1969general}
J.~T. Oden, A general theory of finite elements. {I}. {T}opological
  considerations, International Journal for Numerical Methods in Engineering
  1~(2) (1969) 205--221.

\bibitem{oden1969general2}
J.~T. Oden, A general theory of finite elements. {II}. {A}pplications,
  International Journal for Numerical Methods in Engineering 1~(3) (1969)
  247--259.

\bibitem{shakib1991new}
F.~Shakib, T.~J.~R. Hughes, A new finite element formulation for computational
  fluid dynamics: {IX}. {F}ourier analysis of space-time
  {G}alerkin/least-squares algorithms, Computer Methods in Applied Mechanics
  and Engineering 87~(1) (1991) 35--58.

\bibitem{karakashian1998space}
O.~Karakashian, C.~Makridakis, A space-time finite element method for the
  nonlinear {S}chr{\"o}dinger equation: the discontinuous {G}alerkin method,
  Mathematics of computation 67~(222) (1998) 479--499.

\bibitem{DEMKOWICZ}
L.~Demkowicz, J.~Gopalakrishnan, S.~Nagaraj, P.~Sepulveda, A spacetime {DPG}
  method for the {S}chr{\"o}dinger equation, SIAM Journal on Numerical Analysis
  55~(4) (2017) 1740--1759.

\bibitem{gomez2022space}
S.~G{\'o}mez, A.~Moiola, A space-time {T}refftz discontinuous {G}alerkin method
  for the linear {S}chr{\"o}dinger equation, SIAM Journal on Numerical Analysis
  60~(2) (2022) 688--714.

\bibitem{hain2022ultra}
S.~Hain, K.~Urban, An ultra-weak space-time variational formulation for the
  {S}chr{\"o}dinger equation, arXiv preprint arXiv:2212.14398 (2022).

\bibitem{Hughes2005}
T.~J.~R. Hughes, J.~A. Cottrell, Y.~Bazilevs, Isogeometric analysis: {CAD},
  finite elements, {NURBS}, exact geometry and mesh refinement, Computer
  Methods in Applied Mechanics and Engineering 194~(39) (2005) 4135--4195.

\bibitem{Cottrell2009}
J.~A. Cottrell, T.~J.~R. Hughes, Y.~Bazilevs, Isogeometric analysis: toward
  integration of {CAD} and {FEA}, John Wiley \& Sons, 2009.

\bibitem{Evans_Bazilevs_Babuska_Hughes}
J.~A. Evans, Y.~Bazilevs, I.~Babu\v{s}ka, T.~J.~R. Hughes, $n$-widths,
  sup-infs, and optimality ratios for the $k$-version of the isogeometic finite
  element method, Computer Methods in Applied Mechanics and Engineering 198
  (2009) 1726--1741.

\bibitem{bressan2018approximation}
A.~Bressan, E.~Sande, Approximation in {FEM}, {DG} and {IGA}: a theoretical
  comparison, Numerische Mathematik (2019).

\bibitem{Sangalli2018}
G.~Sangalli, M.~Tani, Matrix-free weighted quadrature for a computationally
  efficient isogeometric $k$-method, Computer Methods in Applied Mechanics and
  Engineering 338 (2018) 117 -- 133.

\bibitem{Montardini2018space}
M.~Montardini, M.~Negri, G.~Sangalli, M.~Tani, {S}pace-time least-squares
  isogeometric method and efficient solver for parabolic problems, Mathematics
  of Computation (accepted for publication) (2019).

\bibitem{loli2020efficient}
G.~Loli, M.~Montardini, G.~Sangalli, M.~Tani, An efficient solver for
  space--time isogeometric {G}alerkin methods for parabolic problems, Computers
  \& Mathematics with Applications 80~(11) (2020) 2586--2603.

\bibitem{Lynch1964}
R.~E. Lynch, J.~R. Rice, D.~H. Thomas, Direct solution of partial difference
  equations by tensor product methods, Numerische Mathematik 6~(1) (1964)
  185--199.

\bibitem{dorao2007parallel}
C.~A. Dorao, H.~A. Jakobsen, A parallel time--space least-squares spectral
  element solver for incompressible flow problems, Applied Mathematics and
  Computation 185~(1) (2007) 45--58.

\bibitem{Gander2015}
M.~J. Gander, 50 years of time parallel time integration, in: Multiple Shooting
  and Time Domain Decomposition Methods, Springer, 2015, pp. 69--113.

\bibitem{kvarving2011fast}
A.~M. Kvarving, E.~M. R{\o}nquist, A fast tensor-product solver for
  incompressible fluid flow in partially deformed three-dimensional domains:
  {P}arallel implementation, Computers \& Fluids 52 (2011) 22--32.

\bibitem{Evans2010book}
L.~C. Evans, {Partial Differential equations}, American Mathematical Society,
  Berlin, 2010.

\bibitem{DeBoor2001}
C.~De~Boor, {A practical guide to splines (revised edition)}, Applied
  {M}athematical {S}ciences, Springer, Berlin, 2001.

\bibitem{Kolda2009}
T.~G. Kolda, B.~W. Bader, Tensor decompositions and applications, SIAM review
  51~(3) (2009) 455--500.

\bibitem{Da2012}
L.~Beir{\~a}o~da Veiga, D.~Cho, G.~Sangalli, Anisotropic {NURBS} approximation
  in isogeometric analysis, Computer Methods in Applied Mechanics and
  Engineering 209 (2012) 1--11.

\bibitem{Deville2002}
M.~O. Deville, P.~F. Fischer, E.~H. Mund, High-order methods for incompressible
  fluid flow, Cambridge University Press, 2002.

\bibitem{gahalaut2014condition}
K.~P.~S. Gahalaut, S.~K. Tomar, C.~Douglas, Condition number estimates for
  matrices arising in {NURBS} based isogeometric discretizations of elliptic
  partial differential equations, arXiv preprint arXiv:1406.6808 (2014).

\bibitem{henning2022ultraweak}
J.~Henning, D.~Palitta, V.~Simoncini, K.~Urban, An ultraweak space-time
  variational formulation for the wave equation: Analysis and efficient
  numerical solution, ESAIM: Mathematical Modelling and Numerical Analysis
  56~(4) (2022) 1173--1198.

\bibitem{Sangalli2016}
G.~Sangalli, M.~Tani, Isogeometric preconditioners based on fast solvers for
  the {S}ylvester equation, SIAM Journal on Scientific Computing 38~(6) (2016)
  A3644--A3671.

\bibitem{Montardini2018}
M.~Montardini, G.~Sangalli, M.~Tani, Robust isogeometric preconditioners for
  the {S}tokes system based on the {F}ast {D}iagonalization method, Computer
  Methods in Applied Mechanics and Engineering 338 (2018) 162 -- 185.

\bibitem{Vazquez2016}
R.~V{\'a}zquez, A new design for the implementation of isogeometric analysis in
  {O}ctave and {M}atlab: {G}eo{PDE}s 3.0, Computers \& Mathematics with
  Applications 72~(3) (2016) 523--554.

\bibitem{Sorber2014}
L.~Sorber, M.~Van~Barel, L.~De~Lathauwer, Tensorlab v2. 0, Available online,
  URL: www.tensorlab.net (2014).

\bibitem{brezis2011functional}
H.~Brezis, Functional analysis, {S}obolev spaces and partial differential
  equations, Vol.~2, Springer, 2011.

\end{thebibliography}
\end{document}